\theoremstyle{plain}
\newtheorem{thma}{Main Theorem}
\newtheorem{thm}{Theorem}[section]
\newtheorem{prop}[thm]{Proposition}
\theoremstyle{definition}
\newtheorem{lem}[thm]{Lemma}
\newtheorem{rmk}[thm]{Remark}
\numberwithin{equation}{thm}
\DeclareMathAlphabet{\mathpzc}{OT1}{pzc}{m}{it}
\DeclareMathOperator{\Kopf}{top}
\DeclareMathOperator{\rad}{rad}
\DeclareMathOperator{\charac}{char}
\DeclareMathOperator{\modu}{mod}
\DeclareMathOperator{\Hom}{Hom}
\DeclareMathOperator{\Ext}{Ext}
\DeclareMathOperator{\soc}{soc}
\DeclareMathOperator{\id}{id}
\DeclareMathOperator{\res}{res}
\begin{document}

\title{Auslander-Reiten theory of small half quantum groups}
\author{Julian K\"ulshammer}
\address{Christian-Albrechts-Universit\"at zu Kiel, Ludewig-Meyn-Str. 4, 24098 Kiel, Germany}
\email{kuelshammer@math.uni-kiel.de}
\thanks{Supported by the D.F.G. priority program SPP1388 "Darstellungstheorie"}

\begin{abstract}
For the small half quantum groups $u_\zeta(\mathfrak{b})$ and $u_\zeta(\mathfrak{n})$ we show that the components of the stable Auslander-Reiten quiver containing gradable modules are of the form $\mathbb{Z}[\mathbb{A}_\infty]$.
\end{abstract}

\maketitle

\section*{Introduction}

For a selfinjective algebra the shape of the components of the stable Auslander-Reiten quiver is an invariant of its Morita equivalence class. In 1995 Erdmann showed (in analogy to a result by Ringel for hereditary algebras) that all components of the stable Auslander-Reiten quiver belonging to a wild block have the same tree class $\mathbb{A}_\infty$. She also gave an analogue of this result for wild local restricted enveloping algebras.\\

In 1990 Lusztig defined a quantum analogue of the restricted enveloping algebra, called the small quantum group. Its Borel and nilpotent parts were shown to have wild representation type by Feldvoss and Witherspoon for $\mathfrak{g}\neq \mathfrak{sl}_2$ in \cite{FW09, FW11} (a generalization of a result by Cibils \cite{Cib97}). In this paper we give an analogue of Erdmann's Theorem and prove:

\begin{thma}
Let $\mathfrak{g}\neq \mathfrak{sl}_2$. Let $\mathcal{C}$ be a component of the stable Auslander-Reiten quiver of $u_\zeta(\mathfrak{b})$ or $u_\zeta(\mathfrak{n})$ containing the restriction of a $u_\zeta(\mathfrak{b})U^0_\zeta(\mathfrak{g})$-module. Then $\mathcal{C}$ has tree class $\mathbb{A}_\infty$.
\end{thma}

The main ingredients in the proof are results of Scherotzke on the Auslander-Reiten quiver of a skew group algebra and an analogue of Dade's Lemma for small quantum groups provided by Drupieski.\\

Our paper is organized as follows: In Section 1 we recall the basic definitions and fix our notation. Section 2 states Kerner and Zacharia's analogue of Webb's Theorem that restricts the tree classes of non-periodic components to Euclidean and infinite Dynkin diagrams and describes the periodic components in more detail. Section 3 is concerned with the graded module category and also provides an analogue of Webb's Theorem in this case. Section 4 and 5 then exclude the Euclidean tree classes and the other infinite Dynkin tree classes, respectively.

\section{Preliminaries}

Let $k$ be an algebraically closed field of characteristic $p\geq 0$. All vector spaces will be assumed to be finite dimensional unless stated otherwise. For a general introduction to Auslander-Reiten theory we refer the reader to \cite{ARS95} or \cite{ASS06}. We denote the syzygy functor by $\Omega$, the Auslander-Reiten translation by $\tau$. For a Frobenius algebra we denote its Nakayama automorhpism by $\nu$.\\

Let $\mathfrak{g}$ be a finite dimensional simple complex Lie algebra. Denote the corresponding set of roots by $\Phi$, a chosen set of simple roots by $\Pi$, the corresponding set of positive roots by $\Phi^+$, the Coxeter number by $h$. Let $\ell>1$ be an odd integer not divisible by three if the corresponding root system is of type $\mathbb{G}_2$. Let $U_\zeta(\mathfrak{g})$ be the Lusztig form of the quantized enveloping algebra. The finite dimensional subalgebra $u_\zeta(\mathfrak{g}):=\langle E_\alpha, F_\alpha, F_\alpha, K_\alpha^{\pm 1}|\alpha\in \Pi\rangle $ is called the small quantum group (for a general introduction, see e.g. \cite{DruPhD}). It has a triangular decomposition $u_\zeta(\mathfrak{g})\cong u_\zeta(\mathfrak{n})\otimes u_\zeta^0(\mathfrak{g})\otimes u_\zeta(\mathfrak{n}^+)$ and we denote its Borel part by $u_\zeta(\mathfrak{b}):=u_\zeta(\mathfrak{n})u_\zeta^0(\mathfrak{g})$. The nilpotent and the Borel part are linked via a skew algebra construction $u_\zeta(\mathfrak{b})=u_\zeta(\mathfrak{n})* (\mathbb{Z}/(\ell))^n$, where $n=|\Pi|$. The zero part of the triangular decomposition of $U_\zeta(\mathfrak{g})$ will be denoted $U_\zeta^0(\mathfrak{g})$.\\

In \cite{FW09, FW11} Feldvoss and Witherspoon have shown that for $\mathfrak{g}\neq \mathfrak{sl}_2$ the connected algebras $u_\zeta(\mathfrak{b})$ and $u_\zeta(\mathfrak{n})$ are wild.\\

For our considerations the following analogue of Dade's Lemma provided by Drupieski is also essential:

\begin{prop}
The restriction of a $u_\zeta(\mathfrak{b})U^0_\zeta(\mathfrak{g})$-module $M$ to $u_\zeta(\mathfrak{b})$ is projective iff it is projective when restricted to every Nakayama subalgebra $u_\zeta(f_\alpha):=\langle F_\alpha\rangle$, where $\alpha\in \Phi^+$.
\end{prop}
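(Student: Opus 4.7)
The ``only if'' direction is the easier one. Fix a convex ordering of $\Phi^+$; the quantum PBW theorem then provides a $k$-basis of $u_\zeta(\mathfrak{n})$ consisting of ordered monomials in the root vectors $F_\alpha$, and combined with the triangular decomposition this makes $u_\zeta(\mathfrak{b})$ a free module over each Nakayama subalgebra $u_\zeta(f_\alpha)\cong k[X]/(X^\ell)$. Since a free extension of finite-dimensional algebras preserves projectivity under restriction, projectivity of $M$ over $u_\zeta(\mathfrak{b})$ forces projectivity of $M$ over every $u_\zeta(f_\alpha)$.

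For the converse I would work with cohomological support varieties. The cohomology ring $H^\bullet(u_\zeta(\mathfrak{b}),k)$ is, by the Ginzburg--Kumar-type computation for the Borel, essentially the coordinate ring of the nilradical $\mathfrak{n}^*$, with polynomial generators in degree two indexed by $\Phi^+$. Attaching to $M$ the support variety $V(M)\subseteq \mathfrak{n}^*$ cut out by the annihilator of $\Ext^\bullet_{u_\zeta(\mathfrak{b})}(M,M)$, the standard criterion reads $M$ projective $\Leftrightarrow V(M)=\{0\}$. Restriction to each Nakayama subalgebra $u_\zeta(f_\alpha)$ corresponds geometrically to the projection onto the coordinate axis spanned by the dual root vector $F_\alpha^\ast$, so the hypothesis on restrictions translates into the statement that $V(M)$ meets each of these $|\Phi^+|$ coordinate axes only at the origin.

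To upgrade this coordinate-axis information to global vanishing of $V(M)$ one uses precisely the hypothesis that $M$ extends to a $u_\zeta(\mathfrak{b})U^0_\zeta(\mathfrak{g})$-module: the quantum torus $U^0_\zeta(\mathfrak{g})$ then acts semisimply on $\Ext^\bullet_{u_\zeta(\mathfrak{b})}(M,M)$ by conjugation, so $V(M)$ is a $U^0_\zeta(\mathfrak{g})$-stable conical subvariety of $\mathfrak{n}^\ast$. Since every weight space of $\mathfrak{n}^\ast$ is one-dimensional and spanned by some $F_\alpha^\ast$, any nonzero torus-stable cone in $\mathfrak{n}^\ast$ must contain at least one of the coordinate axes. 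Combined with the previous step this forces $V(M)=\{0\}$, and hence $M$ is projective.

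The step I expect to be the main obstacle is making the geometric identification of the restriction map $H^\bullet(u_\zeta(\mathfrak{b}),k)\to H^\bullet(u_\zeta(f_\alpha),k)$ with the projection onto the $\alpha$-axis fully rigorous: this needs a careful quantum Koszul-type resolution, together with a precise matching of the degree-two polynomial generators with root-vector duals and their behaviour under restriction. A secondary subtlety is to verify that the $U^0_\zeta(\mathfrak{g})$-action on $\Ext$, which is forced by the enlarged module structure, lifts to an algebraic action on $\mathfrak{n}^\ast$ with the correct weight decomposition, so that the one-dimensionality of the weight spaces really does deliver the orbit-closure argument above.
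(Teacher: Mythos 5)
The paper does not actually prove this proposition; it states it as ``an analogue of Dade's Lemma provided by Drupieski'' and cites it as an external input. So there is no in-paper proof to compare against, but your outline matches the approach Drupieski takes (which is the quantum analogue of Nakano--Parshall--Vella for Frobenius kernels, and of Friedlander--Parshall for restricted Lie algebras): Ginzburg--Kumar identification of the spectrum of cohomology with $\mathfrak{n}$, a rank-variety theorem relating $f_\alpha\in\mathcal{V}(M)$ to non-projectivity of $M|_{u_\zeta(f_\alpha)}$, and torus-stability of $\mathcal{V}(M)$ coming from the $U^0_\zeta(\mathfrak{g})$-structure.

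Two points where your write-up is thinner than it should be. First, in the ``only if'' direction you invoke a convex ordering and PBW to get freeness of $u_\zeta(\mathfrak{b})$ over $u_\zeta(f_\alpha)$; this is immediate for simple $\alpha$, but for a non-simple positive root $\alpha$ the root vector $F_\alpha$ lies in the ``middle'' of any PBW monomial, and freeness over $\langle F_\alpha\rangle$ is a genuine theorem (it is exactly Drupieski's Frobenius-extension result $u_\zeta(\mathfrak{n}):u_\zeta(f_\alpha)$, cf.\ the use made of it elsewhere in this paper), not a one-line consequence of the PBW basis. Second, the step ``since every weight space of $\mathfrak{n}^*$ is one-dimensional, any nonzero torus-stable cone contains a coordinate axis'' is not justified by one-dimensionality alone: you need to pick a cocharacter $\lambda$ of the torus whose pairings with the positive roots are pairwise distinct, and then for a nonzero $x=\sum_{\alpha\in S}c_\alpha f_\alpha\in\mathcal{V}(M)$ take the limit $\lim_{t\to\infty}t^{-\langle\lambda,\alpha_0\rangle}\lambda(t)\cdot x = c_{\alpha_0}f_{\alpha_0}$ where $\alpha_0$ maximizes $\langle\lambda,\cdot\rangle$ on $S$; closedness and conicality of $\mathcal{V}(M)$ then give $f_{\alpha_0}\in\mathcal{V}(M)$. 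One-dimensionality of the weight spaces is what guarantees the limit is a genuine root line rather than merely an element of a weight space, but the degeneration argument itself is an extra ingredient you should make explicit. You correctly flag the remaining hard step --- establishing the rank-variety identification for the restriction maps $H^\bullet(u_\zeta(\mathfrak{b}),k)\to H^\bullet(u_\zeta(f_\alpha),k)$ --- as the main technical content; that is indeed where the real work of Drupieski's proof lies.
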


\section{Webb's Theorem and Periodic components}

In this section we apply a theorem by Kerner and Zacharia to restrict the possible shapes of components to those arising from Euclidean or infinite Dynkin diagrams in the following fashion: For such a diagram $\Delta$ fix an orientation (for $\tilde{\mathbb{A}}_n$ a non-oriented cycle). Then the diagram $\mathbb{Z}[\Delta]$ has vertices $(i,j)$, where $i\in \mathbb{Z}$ and $j\in \Delta$ and arrows $(i,j)\to (i,j')$ and $(i-1,j')\to (i,j)$ for each arrow $j\to j'$ in $\Delta$ and all $i\in \mathbb{Z}$.\\

That all components have a particular shape mainly follows from the fact that $U_\zeta(B_r)$ is an (fg)-Hopf algebra in the following sense:\\

A finite dimensional Hopf algebra $A$ is called (fg)-Hopf algebra if the even cohomology ring $H^{ev}(A,k)$ is finitely generated and each $\Ext^\bullet(M,N)$ is finitely generated as a module for the even cohomology ring (via cup product).\\

For general theory on (fg)-Hopf algebras we refer the reader to \cite{FW09}, \cite{K12} and also to the group algebra case \cite{B91}. Their definition is designed to formulate a theory of support varieteies that has essential features arising in the context of finite groups. To every module $M$ one can associate a variety, the variety associated to the ideal $\ker \Phi_M$, where $\Phi_M:H^{ev}(A,k)\to \Ext^\bullet(M,M)$ is the map induced by tensoring with $M$. This so called support variety $\mathcal{V}_A(M)$ detects certain properties of the module $M$.

\begin{thm}\label{webb}
The non-periodic components $\mathcal{C}$ (i.e. there does not exist $M\in \mathcal{C}$, $m\in \mathbb{N}$ such that $\tau^m M\cong M$) for $u_\zeta(\mathfrak{b})$ are of the form $\mathbb{Z}[\Delta]$, where $\Delta$ is a Euclidean or infinite Dynkin diagram.
\end{thm}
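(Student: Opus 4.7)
The plan is to invoke the Kerner-Zacharia analogue of Webb's Theorem as a black box. That theorem asserts that for a self-injective (fg)-Hopf algebra $A$, the tree class of any component of the stable Auslander-Reiten quiver belongs to the list consisting of finite Dynkin, Euclidean, and infinite Dynkin diagrams. Combined with the Riedtmann structure theorem, which writes each component as $\mathbb{Z}[\Delta]/G$ with $G$ an admissible group of automorphisms, a non-periodic component must be $G$-free and hence of the form $\mathbb{Z}[\Delta]$; for such components $\Delta$ cannot be finite Dynkin, since a finite Dynkin tree class forces $\tau$-periodicity on every vertex. Thus $\Delta$ must be Euclidean or infinite Dynkin, which is exactly what we want. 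What remains is to check the two hypotheses of Kerner-Zacharia for $A=u_\zeta(\mathfrak{b})$.

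Self-injectivity is immediate: $u_\zeta(\mathfrak{b})$ is a finite dimensional Hopf algebra, hence Frobenius by the Larson-Sweedler theorem, and therefore self-injective with a well-defined Nakayama automorphism $\nu$, so that the stable module category and its Auslander-Reiten quiver make sense. For the (fg) property I would appeal to the standard support-variety package for small quantum groups. The even cohomology ring $H^{ev}(u_\zeta(\mathfrak{b}),k)$ is a finitely generated commutative $k$-algebra (it is isomorphic, up to a reduced structure, to a coordinate ring on $\mathfrak{n}$, via the Ginzburg-Kumar-type computation specialised to the Borel), and for every pair of finite dimensional modules $M,N$ the Yoneda algebra $\Ext^\bullet_{u_\zeta(\mathfrak{b})}(M,N)$ is finitely generated over $H^{ev}(u_\zeta(\mathfrak{b}),k)$ via cup product. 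Both facts are documented in the cohomological literature for $u_\zeta(\mathfrak{g})$ and its triangular factors referenced in the excerpt, and together they are exactly the (fg) axioms.

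With self-injectivity and (fg) in place, Kerner-Zacharia's theorem applies directly to $u_\zeta(\mathfrak{b})$ and yields the claimed shape. The main obstacle in the argument is the (fg) verification, since this is where the Hopf-algebraic structure and the nontrivial cohomology computation genuinely enter; however it need not be re-proved here, since it is available in the literature on small quantum groups. No input from Drupieski's Dade-type Proposition is needed at this stage -- that result will only come into play later, when one has to rule out specific tree classes from the list that Kerner-Zacharia provides.
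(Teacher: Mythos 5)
Your proof takes the same route as the paper: establish that $u_\zeta(\mathfrak{b})$ satisfies (fg), deduce that all modules have finite complexity, and invoke the Kerner--Zacharia Main Theorem. Two small imprecisions are worth flagging, though neither affects the soundness of the argument since you treat Kerner--Zacharia as a black box. First, their theorem is not stated for ``(fg)-Hopf algebras''; its hypothesis is a self-injective artin algebra all of whose modules have finite complexity, and (fg) is merely the mechanism by which one verifies that hypothesis (as the paper makes explicit). Second, the side remark that ``a finite Dynkin tree class forces $\tau$-periodicity on every vertex'' is not literally correct: $\mathbb{Z}[\Delta]$ with $\Delta$ finite Dynkin has no $\tau$-periodic vertices, and the genuine reason such components cannot arise for a wild self-injective algebra is internal to Kerner--Zacharia (a non-periodic component with bounded $\tau$-orbit lengths would force the algebra to be representation-finite, via Riedtmann and the Harada--Sai/Auslander style arguments they use). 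Since you hand this part off to the cited theorem, the conclusion stands.
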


\begin{proof}
The result follows from the fact that $u_\zeta(\mathfrak{b})$ satisfies (fg) (see e.g. \cite[Theorem 6.2.6]{Dru10}). This in particular implies that the complexity of each module is finite. Hence the result follows from \cite[Main Theorem]{KZ11}.
\end{proof}

For the algebras $u_\zeta(\mathfrak{n})$ this also holds since projective modules stay projective under restriction.

\begin{thm}
The non-periodic components for the algebra $u_\zeta(\mathfrak{n})$ are of the form $\mathbb{Z}[\Delta]$, where $\Delta$ is a Euclidean or infinite Dynkin diagram.
\end{thm}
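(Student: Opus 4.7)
The strategy is to reduce to Theorem \ref{webb}: once we know every $u_\zeta(\mathfrak{n})$-module has finite complexity, the algebra $u_\zeta(\mathfrak{n})$ (which is Frobenius, hence self-injective) falls under Kerner--Zacharia's Main Theorem verbatim, forcing the non-periodic components to have the shape $\mathbb{Z}[\Delta]$ for $\Delta$ Euclidean or infinite Dynkin.

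The key tool is the skew algebra decomposition $u_\zeta(\mathfrak{b}) = u_\zeta(\mathfrak{n}) * (\mathbb{Z}/(\ell))^n$ recalled in Section~1. From it, $u_\zeta(\mathfrak{b})$ is free of rank $\ell^n$ both as a left and as a right $u_\zeta(\mathfrak{n})$-module, with basis $\{1\otimes g\}_{g\in(\mathbb{Z}/(\ell))^n}$. Consequently the induction functor $u_\zeta(\mathfrak{b})\otimes_{u_\zeta(\mathfrak{n})} -$ is exact and carries projectives to projectives, while restriction also preserves projectivity (the hint given in the text), since the regular $u_\zeta(\mathfrak{b})$-module restricts to $\ell^n$ copies of the regular $u_\zeta(\mathfrak{n})$-module and every indecomposable projective is a summand thereof.

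Now, for an $u_\zeta(\mathfrak{n})$-module $M$, set $N:=u_\zeta(\mathfrak{b})\otimes_{u_\zeta(\mathfrak{n})} M$. Since $u_\zeta(\mathfrak{b})$ satisfies (fg) (as used in the proof of Theorem~\ref{webb}), one has $\cx_{u_\zeta(\mathfrak{b})}(N)<\infty$. Restrict a minimal projective resolution of $N$ over $u_\zeta(\mathfrak{b})$ to $u_\zeta(\mathfrak{n})$: by the preceding paragraph this yields a (typically non-minimal) projective resolution of $N|_{u_\zeta(\mathfrak{n})}$ whose term-dimensions grow polynomially, so $\cx_{u_\zeta(\mathfrak{n})}(N|_{u_\zeta(\mathfrak{n})})<\infty$. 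A direct calculation with the skew-product basis shows that $N|_{u_\zeta(\mathfrak{n})}\cong\bigoplus_{g}{}^g M$, a direct sum of twists of $M$ by the $(\mathbb{Z}/(\ell))^n$-action, so $M$ itself (the $g=e$ summand) is a direct summand. Hence $\cx_{u_\zeta(\mathfrak{n})}(M)<\infty$, which closes the reduction.

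The only non-formal step is verifying that $u_\zeta(\mathfrak{b})\otimes_{u_\zeta(\mathfrak{n})} M$ really decomposes as claimed when restricted back; this is the main (and minor) obstacle, and is settled by unwinding the multiplication rule of the skew product. Everything else is a routine manipulation of resolutions and an application of the already-quoted results.
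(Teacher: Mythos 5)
Your proposal is correct and follows essentially the same route as the paper: induce $M$ to $u_\zeta(\mathfrak{b})$, use the (fg)-property of $u_\zeta(\mathfrak{b})$ to bound the growth of a minimal projective resolution, restrict that resolution back to $u_\zeta(\mathfrak{n})$, observe that $M$ is a direct summand of the restricted induced module so that $\cx_{u_\zeta(\mathfrak{n})}(M)<\infty$, and then invoke the Kerner--Zacharia Main Theorem. The only cosmetic difference is in how you justify the two auxiliary facts: the paper cites Drupieski for ``projective restricts to projective'' and Reiten--Riedtmann (Proposition 1.8) for ``$M$ is a summand of the restricted induced module,'' whereas you derive both directly from the skew group algebra structure $u_\zeta(\mathfrak{b})=u_\zeta(\mathfrak{n})\ast(\mathbb{Z}/(\ell))^n$, including the explicit decomposition $N|_{u_\zeta(\mathfrak{n})}\cong\bigoplus_g {}^{g}M$; this is a valid and in fact slightly more self-contained justification of the same steps.
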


\begin{proof}
The results follows from the fact that the restriction of a projective $u_\zeta(\mathfrak{b})$-module is projective for $u_\zeta(\mathfrak{n})$ by \cite[Remark 3.4]{Dru11}:  For $M\in \modu u_\zeta(\mathfrak{n})$ take a minimal $u_\zeta(\mathfrak{b})$-projective resolution $P_*$ of the induced module $u_\zeta(\mathfrak{b})\otimes_{u_\zeta(\mathfrak{n})} M$. The rate of growth of this projective resolution is finite as $u_\zeta(\mathfrak{b})$ is a finite dimensional Hopf algebra satisfying (fg). If we restrict this resolution to $u_\zeta(\mathfrak{n})$ we will get a projective resolution of $M$ which has finite rate of growth as $M$ is a direct summand of the restriction of $u_\zeta(\mathfrak{b})\otimes_{u_\zeta(\mathfrak{n})} M$ by \cite[Proposition 1.8]{RR85}. Therefore the complexity of $M$ is finite. So the result again follows from \cite[Main Theorem]{KZ11}.
\end{proof}

We proceed by narrowing down the possibilities of periodic components that by a classical result of Happel, Preiser and Ringel are always finite, or infinite tubes (i.e. of the form $\mathbb{Z}[\mathbb{A}_\infty]/\tau^m$ for some $m\in \mathbb{N}$). The case of finite components does not occur in our context since finite components correspond to representation-finite blocks by a classical theorem of Auslander, these do not appear as shown in \cite{FW09, FW11}.

\begin{prop}\label{omegaperiod2}
For $\charac k=0$ let $\ell$ be good for $\Phi$ (i.e. $\ell \geq 3$ for type $\mathbb{B}_n$, $\mathbb{C}_n$ and $\mathbb{D}_n$,
$\ell \geq 5$ for type $\mathbb{E}_6$, $\mathbb{E}_7$ and $\mathbb{G}_2$ and $\ell \geq 7$ for $\mathbb{E}_8$) and $\ell>3$ for types $\mathbb{B}$ and $\mathbb{C}$ and $\ell \nmid n+1$ for type $\mathbb{A}_n$ and $\ell\neq 9$ for $\mathbb{E}_6$. For $\charac k=p>0$ let $p$ be good for $\Phi$ and $\ell\geq h$. Then the $\Omega$-period of every periodic module for $u_\zeta(\mathfrak{b})$ divides $2$ while the $\tau$-period divides $\ell$. Furthermore every $u_\zeta(\mathfrak{n})$-module of complexity $1$ is $\Omega_{u_\zeta(\mathfrak{n})}$- and $\tau_{u_\zeta(\mathfrak{n})}$-periodic and the $\Omega_{u_\zeta(\mathfrak{n})}$-period of a module $M$ divides $2$ while the $\tau$-period is $1$.
\end{prop}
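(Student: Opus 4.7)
The strategy is to establish the $\Omega$-periodicity first, then deduce the $\tau$-periodicity through the formula $\tau \cong \nu^{-1}\Omega^2$ valid for any selfinjective algebra.

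For the Borel statement, the starting point is the description of the cohomology ring: under the hypotheses on $\ell$ (and on $p$ in positive characteristic) listed above, the results of Ginzburg--Kumar, together with their extensions by Bendel--Nakano--Parshall--Pillen and Drupieski, identify $H^\bullet(u_\zeta(\mathfrak{b}),k)$ with the coordinate ring $S(\mathfrak{u}^*)$ of the nilpotent radical $\mathfrak{u}=\mathfrak{n}^+$, concentrated in even degrees with all generators in degree $2$. For a periodic module $M$ the complexity is one, so $H^{ev}/\sqrt{\mathrm{ann}(\Ext^\bullet(M,M))}$ is a one-dimensional graded integral domain generated in degree two. Lifting a generator of this quotient to a class $\zeta \in H^2(u_\zeta(\mathfrak{b}),k)$, the image $\Phi_M(\zeta)\in\Ext^2(M,M)$ becomes a unit in the Tate Ext ring $\widehat{\Ext}^\bullet(M,M)$, and the now standard Farnsteiner-type argument in the (fg) setting forces $\Omega^2 M \cong M$. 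The $\tau$-period then follows from the Nakayama formula together with the observation that the Nakayama automorphism of the skew group algebra $u_\zeta(\mathfrak{b})=u_\zeta(\mathfrak{n})*(\mathbb{Z}/\ell)^n$ satisfies $\nu^\ell$ inner, whence $\tau^\ell M \cong M$.

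For the nilpotent part, let $M$ have complexity one and set $\tilde{M}=u_\zeta(\mathfrak{b})\otimes_{u_\zeta(\mathfrak{n})}M$. As in the proof of the previous theorem, $\tilde{M}$ has complexity one as a $u_\zeta(\mathfrak{b})$-module, and the Borel case yields $\Omega_{u_\zeta(\mathfrak{b})}^2\tilde{M}\cong\tilde{M}$. Because $u_\zeta(\mathfrak{b})$ is free of finite rank over $u_\zeta(\mathfrak{n})$, restriction preserves projectives and hence commutes with the syzygy functor up to projective summands, and $M$ occurs as a direct summand of $\tilde{M}|_{u_\zeta(\mathfrak{n})}$ by the same Rafael--Rowen argument (\cite[Proposition~1.8]{RR85}) already invoked in Theorem~2.2. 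Extracting the corresponding direct summand of $\Omega_{u_\zeta(\mathfrak{n})}^2(\tilde{M}|_{u_\zeta(\mathfrak{n})})\cong\tilde{M}|_{u_\zeta(\mathfrak{n})}$ produces the isomorphism $\Omega_{u_\zeta(\mathfrak{n})}^2 M\cong M$, which in particular shows that any complexity-one module is $\Omega$-periodic. The refinement that the $\tau$-period is exactly $1$ is then a separate short computation with the Nakayama automorphism of the local Frobenius algebra $u_\zeta(\mathfrak{n})$, using that its modular character is trivial on the simple module and hence that $\nu_{u_\zeta(\mathfrak{n})}$ acts as the identity on the stable category.

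The main technical obstacle I anticipate is the extraction of a degree-$2$ unit in $\widehat{\Ext}^\bullet(M,M)$: this is what pins the $\Omega$-period to exactly $2$ rather than to some larger even multiple of it, and it is precisely this step that makes essential use of the polynomial-in-degree-$2$ description of $H^\bullet(u_\zeta(\mathfrak{b}),k)$ and thus of the full list of numerical hypotheses on $\ell$ and $p$. Once this is in place, the $\tau$-period assertions and the passage between $u_\zeta(\mathfrak{b})$ and $u_\zeta(\mathfrak{n})$ are essentially formal consequences of the Nakayama formula and of the freeness of $u_\zeta(\mathfrak{b})$ over $u_\zeta(\mathfrak{n})$.
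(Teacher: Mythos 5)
Your strategy for $u_\zeta(\mathfrak{b})$ matches the paper's: the $\Omega$-period bound follows from the cohomology ring being generated in degree two, and the $\tau$-period bound from $\tau\cong\Omega^2\circ\nu$ and control on the order of the Nakayama automorphism. For the Nakayama automorphism, however, your assertion that ``$\nu^\ell$ is inner'' because $u_\zeta(\mathfrak{b})$ is a skew group algebra of $u_\zeta(\mathfrak{n})$ by $(\mathbb{Z}/\ell)^n$ is not justified as stated; the paper instead exhibits $u_\zeta(\mathfrak{b})$ as a $\gamma$-Frobenius extension of a Nakayama subalgebra $u_\zeta(f_\beta)$, reads off the Frobenius form explicitly, and uses the commutation relations of de Concini--Kac to compute $\nu$ on the generators $K_\alpha$ and $F_\beta$, obtaining a scaling by a power of $\zeta$ whose order divides $\ell$. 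Your route could be made rigorous, but the order of $\nu$ modulo inner automorphisms is precisely what needs to be computed, and ``skew group algebra over an $\ell$-group'' does not by itself give that bound.

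The more serious problem is in your treatment of the $\Omega$-period for $u_\zeta(\mathfrak{n})$. You set $\tilde{M}=u_\zeta(\mathfrak{b})\otimes_{u_\zeta(\mathfrak{n})}M$, get $\Omega^2_{u_\zeta(\mathfrak{b})}\tilde{M}\cong\tilde{M}$, restrict, and conclude $\Omega^2_{u_\zeta(\mathfrak{n})}M\cong M$ by ``extracting the corresponding direct summand.'' This step does not go through. Restriction indeed gives $\Omega^2_{u_\zeta(\mathfrak{n})}(\tilde{M}|_{u_\zeta(\mathfrak{n})})\cong\tilde{M}|_{u_\zeta(\mathfrak{n})}$ in the stable category, but $\tilde{M}|_{u_\zeta(\mathfrak{n})}$ decomposes into the twists ${}^{g}M$ for $g\in(\mathbb{Z}/\ell)^n$, and $\Omega^2_{u_\zeta(\mathfrak{n})}$ only permutes the set of indecomposable summands; there is no reason it fixes the summand isomorphic to $M$. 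This argument yields $\Omega$-periodicity of $M$ with some period, but not the sharp bound that the $\Omega$-period divides $2$, which is essential for the subsequent exclusion of Euclidean components. The paper circumvents this by running the cohomological argument directly on $u_\zeta(\mathfrak{n})$: since $u_\zeta(\mathfrak{n})$ is not a Hopf algebra, it replaces ordinary cohomology with Hochschild cohomology, applies the Erdmann--Holloway--Snashall--Solberg--Taillefer framework (\cite[Proposition 5.4]{EHTSS04}) to the subalgebra $HH^0(u_\zeta(\mathfrak{n}))\cdot H^{ev}(u_\zeta(\mathfrak{n}),k)^{(\mathbb{Z}/\ell)^n}$, uses the identification $H^{ev}(u_\zeta(\mathfrak{n}),k)^{(\mathbb{Z}/\ell)^n}\cong H^{ev}(u_\zeta(\mathfrak{b}),k)$ to inherit generation in degree two, and invokes the arguments of \cite[Section 5]{FW11} for the (Fg)-conditions. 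You would need some version of this to pin the period to $2$.

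Finally, the claim that complexity one forces $\Omega$- and $\tau$-periodicity of $u_\zeta(\mathfrak{n})$-modules in the first place is obtained in the paper from a skew-group-algebra transfer result (\cite[Lemma 5.13]{Sch09}) rather than from the induction/restriction argument; both could work for mere periodicity, but note that your direct-summand reasoning already has to establish periodicity before the (unproved) period bound can even be discussed.
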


\begin{proof}
By \cite[Proposition 1.1 (iii)]{K12} that the $\Omega_{u_\zeta(\mathfrak{b})}$-period of a module is two follows from the fact that the even cohomology ring is generated in degree two by \cite[Theorem 5.1.3]{Dru11}.\\
By a similar reasoning as in \cite[Section 5]{K12} one can show that $u_\zeta(\mathfrak{b})$ is a $\gamma$-Frobenius extension of $u_\zeta(f_\beta):=\langle F_\beta\rangle$ for some automorphism $\gamma$. As the latter is a Frobenius algebra it follows from \cite[Proposition 1.3]{BF93} that also $u_\zeta(\mathfrak{b})$ is a Frobenius algebra. From this construction we get the Frobenius homomorphism induced by $F^{\ell-1}K^{\ell-1}\mapsto 1$. Now the commutation relations from \cite[Proposition 1.7]{dCK90} in the graded algebra yield a Nakayama automorphism with $K_\alpha\mapsto \zeta^{-\sum_{\beta\in \Phi^+}(\alpha,\beta)}K_\alpha$ and $F_\beta\mapsto \zeta^{\sum_{\alpha'\in \Pi}(\alpha',\beta)}F_\beta$. As $\zeta$ is an $\ell$-th root of unity its order divides $\ell$. As for Frobenius algebras we have $\tau\cong \Omega^2\circ \nu$ it follows that the $\tau_{u_\zeta(\mathfrak{b})}$-period divides $\ell$.\\
By \cite[Lemma 5.13]{Sch09} for the skew group algebra situation under observation we have that every $u_\zeta(\mathfrak{n})$-module of complexity $1$ is periodic iff every $u_\zeta(\mathfrak{b})$-module of complexity $1$ is periodic. This is true because $u_\zeta(\mathfrak{b})$ is an (fg)-Hopf algebra.\\
For the $\Omega$ and $\tau$-period of a $u_\zeta(\mathfrak{n})$-module we use a similar reasoning as for $u_\zeta(\mathfrak{b})$. But as $u_\zeta(\mathfrak{n})$ is not a Hopf algebra we have to use Hochschild cohomology instead of ordinary cohomology. We apply \cite[Proposition 5.4]{EHTSS04} to the subalgebra $HH^0(u_\zeta(\mathfrak{n}))\cdot H^{ev}(u_\zeta(\mathfrak{n}),k)^{\mathbb{Z}/(\ell)^n}$ of the Hochschild cohomolgy $HH^\bullet(u_\zeta(\mathfrak{n})$ (compare \cite[Section 5]{FW11}). As $H^{ev}(u_\zeta(\mathfrak{n}),k)^{\mathbb{Z}/(\ell)^n}\cong H^{ev}(u_\zeta(\mathfrak{b}),k)$ as algebras, the finite generation in degree two follows. The arguments of \cite[Section 5]{FW11} carry through in our slightly more general setting to show the appropriate (Fg)-conditions for Hochschild cohomology. Thus the result follows.
\end{proof}

\begin{rmk}
The bounds are sharp, e.g. for $\mathfrak{g}=\mathfrak{sl}_2$ the $\Omega$-period is two while the $\tau$-periods are $\ell$ and $1$ for the Borel and nilpotent part, respectively.
\end{rmk}

\section{Graded Modules}

We proceed by studying the $\mathbb{Z}^n$-graded modules for the small half quantum groups.

\begin{lem}\label{res}
Let $L\in \{\mathfrak{g}, \mathfrak{b},\mathfrak{n}$\}.
\begin{enumerate}[(i)]
\item The category of finite dimensional modules over $u_\zeta(L)U_\zeta^0(\mathfrak{g})$ is a sum of blocks for the category $\modu u_\zeta(L)\#U_\zeta^0(\mathfrak{g})$.
\item The category of finite dimensional $u_\zeta(L)U_\zeta^0(\mathfrak{g})$-modules has almost split sequences.
\item The canonical restriction functor $\modu u_\zeta(L)U_\zeta^0(\mathfrak{g})\to \modu u_\zeta(L)$ sends indecomposables to indecomposables and almost split sequences to almost split sequences.
\item The restriction functor induces a homomorphism $\mathcal{F}: \Gamma_s(u_\zeta(L)U_\zeta^0(\mathfrak{g}))\to \Gamma_s(u_\zeta(L))$ of stable translation quivers and components are mapped to components via this functor.
\end{enumerate}
\end{lem}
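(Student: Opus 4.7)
The plan is to identify $\modu u_\zeta(L) U_\zeta^0(\mathfrak{g})$ with a sum of blocks in the module category of the honest skew group algebra $u_\zeta(L) \# U_\zeta^0(\mathfrak{g})$, after which the remaining parts follow from the Auslander-Reiten machinery for skew group algebras developed by Scherotzke in \cite{Sch09}.

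For (i), $U_\zeta^0(\mathfrak{g}) \cong k[\mathbb{Z}^n]$ acts on $u_\zeta(L)$ by the conjugation inherited from $U_\zeta(\mathfrak{g})$, so $u_\zeta(L) \# U_\zeta^0(\mathfrak{g})$ is a genuine skew group algebra. Multiplication inside $U_\zeta(\mathfrak{g})$ yields a surjective algebra homomorphism $u_\zeta(L) \# U_\zeta^0(\mathfrak{g}) \twoheadrightarrow u_\zeta(L) U_\zeta^0(\mathfrak{g})$ identifying the ``inner'' $K_\alpha \in u_\zeta(L)$ with the ``outer'' $K_\alpha \in U_\zeta^0(\mathfrak{g})$ (for $L = \mathfrak{n}$ no identification is necessary and the two algebras already coincide). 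Since the conjugation actions of the two copies of $K_\alpha$ on $u_\zeta(L)$ agree, the elements $z_\alpha := K_\alpha^{(1)} (K_\alpha^{(2)})^{-1}$ are central units; by Schur's lemma they act on each indecomposable finite-dimensional module by a character in $(k^\times)^n$, producing a decomposition of $\modu u_\zeta(L) \# U_\zeta^0(\mathfrak{g})$ into a direct sum of subcategories indexed by $(k^\times)^n$. The summand on which every $z_\alpha$ acts trivially is precisely $\modu u_\zeta(L) U_\zeta^0(\mathfrak{g})$.

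Part (ii) then reduces to establishing almost split sequences in $\modu u_\zeta(L) \# U_\zeta^0(\mathfrak{g})$; but these modules are exactly the finite-dimensional $\mathbb{Z}^n$-graded $u_\zeta(L)$-modules, and since $u_\zeta(L)$ is finite-dimensional Frobenius (cf.\ the proof of Proposition \ref{omegaperiod2}), its graded module category is Krull-Schmidt with enough projectives and the usual Auslander-Reiten construction applies. For (iii), the restriction $\modu u_\zeta(L) U_\zeta^0(\mathfrak{g}) \to \modu u_\zeta(L)$ is the push-down of the $\mathbb{Z}^n$-Galois covering given by the grade-shift action $M \mapsto M\langle\lambda\rangle$; each finite-dimensional graded module has finite support and hence trivial $\mathbb{Z}^n$-stabiliser, so Scherotzke's results \cite{Sch09} on push-down functors give both that indecomposables restrict to indecomposables and that almost split sequences restrict to almost split sequences. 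Part (iv) is a formal consequence of (iii) at the level of stable translation quivers.

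The main technical step is (i), where one must carefully set up the surjection between the two algebras and the central character decomposition. Once the block description is in place, (ii)--(iv) follow by transferring Scherotzke's analysis of skew group algebras, which applies in our infinite-abelian-group setting precisely because every finite-dimensional module has finite support and the relevant $\mathbb{Z}^n$-stabilisers therefore collapse.
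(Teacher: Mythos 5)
The paper's own ``proof'' is a one-line pointer to the analogous development in \cite[Section 5]{K12}, so I am judging your proposal on its own merits rather than against a written argument in this paper.

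Your overall strategy (identify the subalgebra module category as a central-character summand of the smash-product module category, then use covering-theoretic transfer of AR-theory along the forgetful functor) is the natural one and very likely what \cite{K12} does. However, there are two genuine soft spots. First, in (i) the appeal to Schur's lemma is imprecise: Schur's lemma tells you a central element acts by a scalar on a \emph{simple} module, not on an indecomposable one. On an indecomposable finite-dimensional module a central unit $z_\alpha$ need only act by $\lambda + (\text{nilpotent})$, so the eigenvalue-$1$ summand you construct is a priori the category of modules on which $z_\alpha-1$ acts \emph{nilpotently}, which is strictly larger than $\modu u_\zeta(L)U_\zeta^0(\mathfrak g)$. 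To make the argument close, you need to record that $z_\alpha$ acts semisimply --- which holds if one adopts the standard convention that $U_\zeta^0(\mathfrak g)$ acts as a torus, i.e.\ that one works with weight ($\mathbb{Z}^n$-graded) modules; as written you implicitly switch to this convention only in part (ii) when you call these ``exactly the $\mathbb{Z}^n$-graded $u_\zeta(L)$-modules.'' That identification is the real content and should appear in (i).

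Second, for (iii)--(iv) the citation of Scherotzke \cite{Sch09} does not do what you want: that paper treats skew group algebras $\Lambda * G$ with $G$ a \emph{finite} group, and the functor in question there is restriction/induction between $\Lambda$ and $\Lambda * G$. Here the relevant functor is the forgetful functor from $\mathbb{Z}^n$-graded $u_\zeta(L)$-modules to $u_\zeta(L)$-modules, i.e.\ a Galois covering by the \emph{infinite} torsion-free group $\mathbb{Z}^n$. The statement that this preserves indecomposables and AR-sequences (and hence induces a morphism of stable translation quivers sending components onto components) is Gordon--Green's theory of group-graded algebras / Gabriel's covering theory, not \cite{Sch09}. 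With those two corrections your argument is sound, and it takes essentially the same route the author delegates to \cite{K12}.
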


\begin{proof}
The result follows as in \cite[Section 5]{K12}
\end{proof}

An analogue of Webb's Theorem also holds for the category of graded modules for small quantum groups and their half analogues by constructing a subadditive function on the components. Our reasoning is similar to \cite{Fa05} for restricted enveloping algebras.

\begin{thm}
The tree classes of the components of the stable Auslander-Reiten quiver of $u_\zeta(L)U^0_\zeta(\mathfrak{g})$, where $L\in \{\mathfrak{g}, \mathfrak{b}, \mathfrak{n}\}$, are Euclidean diagrams or infinite Dynkin diagrams.
\end{thm}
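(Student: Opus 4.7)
The strategy mimics Farnsteiner's graded analogue of Webb's Theorem for restricted enveloping algebras \cite{Fa05}: construct a non-zero subadditive function on each component of $\Gamma_s(u_\zeta(L)U^0_\zeta(\mathfrak{g}))$ and invoke the Happel-Preiser-Ringel classification of stable translation quivers that carry such a function.

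Let $\mathcal{C}'$ be a component of $\Gamma_s(u_\zeta(L)U^0_\zeta(\mathfrak{g}))$. By Lemma \ref{res}(iv), the restriction functor induces a morphism of stable translation quivers $\mathcal{F}$ sending $\mathcal{C}'$ into a single component $\mathcal{C}$ of $\Gamma_s(u_\zeta(L))$, and parts (ii)--(iii) guarantee that an almost split sequence $0\to\tau Y\to\bigoplus_i F_i\to Y\to 0$ in the graded category restricts to the almost split sequence $0\to\mathcal{F}(\tau Y)\to\bigoplus_i\mathcal{F}(F_i)\to\mathcal{F}(Y)\to 0$ in the ungraded one, with matching middle-term decomposition and preserved $\tau$. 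Hence any subadditive positive function $d$ on $\mathcal{C}$ pulls back via $\mathcal{F}$ to a subadditive positive function $d\circ\mathcal{F}$ on $\mathcal{C}'$.

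On the ungraded side, the (fg) property of $u_\zeta(L)$ together with the resulting finiteness of complexity yields such a subadditive function on $\mathcal{C}$---for instance a dimension-of-Ext function of the form $M\mapsto\dim_k\underline{\Hom}(\Omega^n V,M)$ for fixed $V\in\mathcal{C}$ and $n$ sufficiently large, as in the proof of \cite[Main Theorem]{KZ11}. Applying Happel-Preiser-Ringel to $\mathcal{C}'$ equipped with $d\circ\mathcal{F}$, the tree class of $\mathcal{C}'$ is finite Dynkin, Euclidean, or infinite Dynkin.

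It remains to exclude the finite Dynkin case. The ungraded component $\mathcal{C}$ has tree class among Euclidean, infinite Dynkin, and $\mathbb{A}_\infty$: Theorem \ref{webb} and its $u_\zeta(\mathfrak{n})$-analogue cover the non-periodic case, while the periodic components are infinite tubes of tree class $\mathbb{A}_\infty$ (finite ungraded components would correspond to representation-finite blocks and are excluded by wildness \cite{FW09, FW11}). The map $\mathcal{F}\colon\mathcal{C}'\to\mathcal{C}$ behaves as a covering of stable translation quivers in the sense of \cite{Sch09}, with deck transformations supplied by the grading induced by the character group of $U^0_\zeta(\mathfrak{g})$. Passing to such covers transports Euclidean and infinite Dynkin tree classes into themselves or into other infinite Dynkin tree classes (e.g.\ the universal cover of $\mathbb{Z}[\tilde{\mathbb{A}}_n]$ is $\mathbb{Z}[\mathbb{A}_\infty^\infty]$ and the universal cover of a tube is $\mathbb{Z}[\mathbb{A}_\infty]$); in particular finite Dynkin tree classes do not appear. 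The main obstacle is making this last step rigorous: identifying $\mathcal{F}$ precisely as the appropriate translation-quiver covering and applying the combinatorics of universal covers, in parallel with Scherotzke's framework already invoked for Proposition \ref{omegaperiod2}.
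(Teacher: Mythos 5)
Your overall strategy (build a subadditive function on the graded component by composing with the restriction functor, apply Happel--Preiser--Ringel, then exclude the finite Dynkin case) matches the paper's, but the two steps where you fill in the details both diverge from what the paper actually does, and the second one leaves a genuine gap.

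For the subadditive function, the paper does not pull back a pre-existing function on the ungraded component. It constructs one directly: fix $\alpha\in\Phi^+$ with $\mathcal{F}(M)|_{u_\zeta(f_\alpha)}$ non-injective (which exists by Drupieski's Dade-type lemma), set $M_\alpha = u_\zeta(L)\otimes_{u_\zeta(f_\alpha)}k$, and take $d_\alpha([M]) = \dim\Ext^1_{u_\zeta(L)}(M_\alpha,\mathcal{F}(M))$. Non-vanishing is then immediate from Frobenius reciprocity across the $\gamma$-Frobenius extension $u_\zeta(L):u_\zeta(f_\alpha)$, and $\tau$-invariance is an explicit computation using $\tau\cong\Omega^2\circ\nu$ and the fact that $\nu$ acts trivially on $M_\alpha$. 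Your proposal to reuse the $\underline{\Hom}(\Omega^n V,-)$ device from \cite{KZ11} is less clean: that argument is calibrated for the non-periodic case, so you are forced to split off the periodic tube case separately, and you still have to verify positivity everywhere on $\mathcal{C}'$ (the paper's Frobenius reciprocity step is doing exactly that work). This is a workable route but not the paper's, and you would need to actually carry out those checks.

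The real gap is the exclusion of finite Dynkin tree class. You invoke a covering-theoretic argument: $\mathcal{F}$ should be a covering of translation quivers, and coverings of Euclidean/tube components produce only Euclidean or infinite Dynkin tree classes upstairs. You then acknowledge this is the "main obstacle" and not rigorous. Indeed, making $\mathcal{F}$ into an honest translation-quiver covering and controlling its fibres would require nontrivial extra work (and is not needed). The paper's argument is much shorter and in the opposite direction: a finite-Dynkin component is a finite component, and by Lemma~\ref{res}(iv) components map to components, so its image in $\Gamma_s(u_\zeta(L))$ would be finite; but finite components would force a representation-finite block, which is impossible by \cite{K11}. Replace your covering argument with this and the gap closes.
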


\begin{proof}
By \cite[Theorem 4.3]{Dru11} and the foregoing lemma there exists $\alpha\in \Phi^+$ such that the restriction $\mathcal{F}(M)|_{u_\zeta(f_\alpha)}$ for $[M]\in \Theta$ is not injective. Consider the induced module $M_\alpha:= u_\zeta(L)\otimes_{u_\zeta(f_\alpha)} k$. The function $d_\alpha: \Theta\to \mathbb{N}, [M]\mapsto \dim \Ext^1_{u_\zeta(L)}(M_\alpha, \mathcal{F}(M))$ is a subadditive $\tau_{u_\zeta(L)U_\zeta^0(\mathfrak{g})}$-invariant function for each component of the Auslander-Reiten quiver of $u_\zeta(L)U_\zeta^0(\mathfrak{g})$: By the arguments of \cite[Section 5]{K12} we have that $u_\zeta(L):u_\zeta(f_\alpha)$ is a $\gamma$-Frobenius extension for some automorphism $\gamma$. Therefore we can use Frobenius reciprocity (\cite[Lemma 7]{NT60}) to conclude that:
\[d_\alpha([M])=\dim \Ext^1_{u_\zeta(L)}(M_\alpha,\mathcal{F}(M))=\dim \Ext^{1}_{u_\zeta(f_\alpha)}(k, \mathcal{F}(M)|_{u_\zeta(f_\alpha)})\neq 0.\]
Therefore the function is well-defined.\\
Moreover, we have that $\mathcal{F}(\tau_{u_\zeta(L)U^0_\zeta(\mathfrak{g})}(M))\cong \tau_{u_\zeta(L)}(\mathcal{F}(M))\cong \Omega^2_{u_\zeta(L)}(\mathcal{F}(M)^{(\nu)})$ by the foregoing lemma, $M_\alpha^{(\nu)}\cong M_\alpha$ since $\nu|_{u_\zeta(f_\alpha)}=\id$ ($\nu\otimes \id$ provides an isomorphism)  and $\Omega^2_{u_\zeta(L)}(M_\alpha)\cong M_\alpha\oplus P$ for a projective module $P$ by inducing a projective $u_\zeta(f_\alpha)$-resolution of $k$ to $u_\zeta(L)$. Therefore we obtain:
\begin{align*}
d_\alpha([\tau_{u_\zeta(L)U_\zeta^0(\mathfrak{g})}(M)])&=\dim \Ext^1_{u_\zeta(L)}(M_\alpha, \mathcal{F}(\tau_{u_\zeta(L)U_\zeta^0(\mathfrak{g})}(M)))\\
&=\dim \Ext^1_{u_\zeta(L)}(\Omega^2_{u_\zeta(L)}(M_\alpha), \Omega^2_{u_\zeta(L)}(\mathcal{F}(M)^{(\nu)}))\\
&=\dim \Ext^1_{u_\zeta(L)}(M_\alpha, \mathcal{F}(M)^{(\nu)})\\
&=\dim \Ext^1_{u_\zeta(L)}(M_\alpha^{(\nu)}, \mathcal{F}(M)^{(\nu)})=d_\alpha([M]),
\end{align*}
i.e. $d_\alpha$ is $\tau$-invariant. It follows from \cite[Lemma 3.2]{ES92} that $d_\alpha$ is a subadditive function on the components of the stable Auslander-Reiten quiver of $u_\zeta(L)U^0_\zeta(\mathfrak{g})$. Hence the result follows from \cite[Theorem, p.286]{HPR80}. The case of finite components does not occur as they restrict to finite components on $u_\zeta(L)$ which are not possible since by \cite{K11} there are no representation-finite blocks for $u_\zeta(L)$.
\end{proof}

\begin{rmk}
This provides a different way to prove the result for components of $u_\zeta(L)$ consisting of gradable modules: The function which is obtained by removing the forgetful functor $\mathcal{F}$ in the foregoing proposition provides a subadditive $\tau_{u_\zeta(L)}$-invariant function for every component of the stable Auslander-Reiten quiver of $u_\zeta(L)$ containing gradable modules. The foregoing proof applies verbatim (replacing $u_\zeta(L)U_\zeta^0(\mathfrak{g})$ with $u_\zeta(L)$ and removing $\mathcal{F}$ everywhere). Thus this provides us with a subadditive $\tau$-invariant function on $\Theta$, a component of $u_\zeta(L)U^0_\zeta(\mathfrak{g})$, that stays subadditive $\tau$-invariant upon restriction. This way one can get information on $\Theta$ or $\mathcal{F}(\Theta)$ by observing the other.
\end{rmk}

\section{Euclidean components}

In this section we exclude components of the form $\mathbb{Z}[\Delta]$, where $\Delta$ is a Euclidean diagram. Our approach relies on results by Scherotzke \cite{Sch09} who corrected and generalized results of Farnsteiner for restricted enveloping algebras \cite{Fa99b}. To apply it we need the following statement:

\begin{prop}
For $\charac k=0$ let $\ell$ be good for $\Phi$ and $\ell>3$ for types $\mathbb{B}$ and $\mathbb{C}$ and $\ell \nmid n+1$ for type $\mathbb{A}_n$ and $\ell\neq 9$ for $\mathbb{E}_6$. For $\charac k=p>0$ let $p$ be good for $\Phi$ and $\ell\geq h$. If $\mathfrak{b}\neq \mathfrak{b}_{\mathfrak{sl}_2}$ there is a non-periodic module of length $3$ for $u_\zeta(\mathfrak{b})$.
\end{prop}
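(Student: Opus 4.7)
The strategy is to exhibit an explicit three-dimensional module $M$ whose support variety $\mathcal{V}(M)$ has dimension at least $2$; in the (fg) setting this forces $\cx M \geq 2$ and hence non-periodicity (periodic being equivalent to complexity one under the hypotheses already in force, by Proposition \ref{omegaperiod2} and the identification of periodic modules with those of complexity one).

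Since $\mathfrak{g}\neq\mathfrak{sl}_2$, we may fix two distinct simple roots $\alpha,\beta\in\Pi$. Define $M$ to have $k$-basis $\{v,v_\alpha,v_\beta\}$, let $u_\zeta^0(\mathfrak{g})$ act diagonally by the weights $0,-\alpha,-\beta$ respectively, set $F_\alpha\cdot v:=v_\alpha$ and $F_\beta\cdot v:=v_\beta$, and declare every remaining action of a generator to be zero. The $K$-$F$ commutation relations then hold by construction; the truncations $F_\gamma^\ell=0$ are immediate because each $F_\gamma$ is at most square-zero on $M$; and each quantum Serre relation is a sum of monomials containing at least two copies of a single $F_\gamma$, so every summand vanishes on $M$. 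Thus $M$ is a well-defined $u_\zeta(\mathfrak{b})$-module of composition length three with $k_0$ on top and $k_{-\alpha}\oplus k_{-\beta}$ as radical.

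It remains to show $M$ is non-periodic. The restrictions $M|_{u_\zeta(f_\alpha)}$ and $M|_{u_\zeta(f_\beta)}$ are both isomorphic to $J_2\oplus J_1$ as modules over $k[F]/(F^\ell)$, hence non-projective because $\ell\geq 3$. By Drupieski's version of Dade's Lemma (Proposition 1.1) this already rules out $M$ being projective. To upgrade this to $\cx M\geq 2$ I would invoke the description of $H^{ev}(u_\zeta(\mathfrak{b}),k)$ in \cite{Dru11} as a polynomial ring on $|\Phi^+|$ generators indexed by the positive roots, so that $\mathcal{V}(M)$ is identified with a conical subvariety of the nilradical whose coordinate along each axis $\gamma$ is detected by $M|_{u_\zeta(f_\gamma)}$. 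A direct check then shows that for every $c\in k$ the operator $F_\alpha+cF_\beta$ acts on $M$ with Jordan type $J_2\oplus J_1$ (since it sends $v\mapsto v_\alpha+cv_\beta$ and kills both $v_\alpha$ and $v_\beta$), so the entire two-dimensional plane spanned by the $\alpha$- and $\beta$-coordinate directions lies inside $\mathcal{V}(M)$, forcing $\cx M\geq 2$.

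The main obstacle I expect is precisely this last step. Drupieski's Dade Lemma as stated tests projectivity only on the discrete family of Nakayama subalgebras $u_\zeta(f_\gamma)$, not on a generic one-parameter subalgebra, and the element $F_\alpha+cF_\beta$ need not generate a subalgebra isomorphic to $k[x]/(x^\ell)$ when $\alpha$ and $\beta$ fail to be orthogonal (as happens, e.g., for type $\mathbb{A}_2$). The argument is cleanest when one can locate an orthogonal pair of roots, reducing the analysis to a commutative subalgebra where BCR-style rank-variety theory applies directly; in the remaining cases one must either use a $\pi$-point refinement of Drupieski's detection theorem or a direct cohomological computation showing that $\dim\Ext^n_{u_\zeta(\mathfrak{b})}(M,k)$ grows polynomially of degree at least one.
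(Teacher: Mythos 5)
Your argument takes a genuinely different route from the paper, and there is a real gap in the crucial step, which you yourself identify.

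The paper does not exhibit an explicit module at all. It argues non-constructively: since $u_\zeta(\mathfrak{b})$ is not a Nakayama algebra, Huppert's theorem produces a simple module $S$ with $\sum_T \dim\Ext^1(T,S)\geq 2$; one then takes a length-$3$ submodule $X$ of $\soc^2 P$, where $P$ is the injective hull of $S$, and checks that $\soc X$ is simple while $\Kopf X$ has length $2$. Non-periodicity is then derived purely by dimension counting: if $X$ were periodic, Proposition \ref{omegaperiod2} gives $\Omega^2 X\cong X$, yielding an exact sequence $0\to X\to P\to Q\to X\to 0$ with $Q$ projective; then $\dim P=\dim Q$ forces $Q$ indecomposable (all PIMs have equal dimension), which in a selfinjective algebra forces $\Kopf X$ to be simple --- contradiction. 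No support-variety theory is used.

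Your explicit module $M$ with basis $\{v,v_\alpha,v_\beta\}$ is well-defined, and the observation that $M|_{u_\zeta(f_\alpha)}$ and $M|_{u_\zeta(f_\beta)}$ are both non-projective is correct. The gap is exactly where you flag it: Drupieski's detection theorem (\cite[Corollary 5.13]{Dru10}) only tells you that $f_\alpha,f_\beta\in\mathcal{V}_{u_\zeta(\mathfrak{b})}(M)$, i.e.\ that two \emph{lines} lie in the support variety --- that gives $\dim\mathcal{V}(M)\geq 1$, not $\geq 2$. To conclude that the whole plane through $f_\alpha$ and $f_\beta$ lies in $\mathcal{V}(M)$ you need a rank-variety description of the support variety (testing on all one-parameter subalgebras / $\pi$-points), and as you note, $\langle F_\alpha+cF_\beta\rangle$ need not be a Nakayama subalgebra when $\alpha,\beta$ are not orthogonal; so the step from ``two lines in $\mathcal{V}(M)$'' to ``$\cx M\geq 2$'' is not justified by the tools the paper invokes. (Also note that having $\dim\mathcal{V}(M)\geq 2$ is genuinely stronger than what the statement requires: the paper only needs non-periodicity, which it obtains by a cheaper argument.)

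If you wish to keep your explicit $M$ but close the gap without importing $\pi$-point technology, the cleanest fix is to dualize the paper's dimension count: $M$ has simple top $k_0$, so its projective cover $Q_0$ is indecomposable; if $M$ were periodic, $\Omega^2 M\cong M$ and $0\to M\to Q_1\to Q_0\to M\to 0$ would force $\dim Q_1=\dim Q_0$, hence $Q_1$ indecomposable, hence $\soc M=\soc Q_1$ simple --- contradicting $\soc M=k_{-\alpha}\oplus k_{-\beta}$. This yields the same conclusion for your concrete $M$ using only the ingredients already in the paper.
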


\begin{proof}
Let $\mathcal{S}$ be a set of representatives for the isoclasses of the simple $u_\zeta(\mathfrak{b})$-modules. As $u_\zeta(\mathfrak{b})$ is not a Nakayama algebra, it follows from \cite[Theorem 9]{Hup81} that there is a simple module $S$, such that
\[\sum_{[T]\in \mathcal{S}}\dim \Ext^1_{u_\zeta(\mathfrak{b})}(T,S)\geq 2\]
Let $P$ be the injective hull of $S$. By the definition of $\Ext^1$ and $\soc^2$ we immediately obtain
\[2\leq \sum_{[T]\in \mathcal{S}_{u_\zeta(\mathfrak{b})}}\dim\Hom_{u_\zeta(\mathfrak{b})}(T,\soc^2(P)/\soc(P)),\]
so that $l(\soc^2(P)/\soc P)\geq 2$ and $l(\soc^2(P))\geq 3$.\\
Denote by $\pi:P\to P/\soc(P)$ the natural projection. Then $\pi$ induces a surjection $\soc^2(P)\twoheadrightarrow \soc(P/\soc(P))$ and $\pi(\rad{u_\zeta(\mathfrak{b})}\soc^2(P))=0$. Thus $\rad{u_\zeta(\mathfrak{b})}\soc^2(P)=\soc(P)$. Now let $X\subset \soc^2(P)$ be a submodule of length $3$. Then $\soc X=\soc P$ and $\rad{u_\zeta(\mathfrak{b})}X\subseteq \rad{u_\zeta(\mathfrak{b})}\soc^2(P)= \soc(X)$, so that $\rad{u_\zeta(\mathfrak{b})}X=\soc X$. Thus $\Kopf X=X/\soc X$ has length $2$. Suppose that $X$ was periodic. Then by Proposition \ref{omegaperiod2} $\Omega^2_{u_\zeta(\mathfrak{b})} X\cong X$. As $P$ is the injective hull of $\soc X$ there exists a projective module $Q$ and an exact sequence
\[0\to X\to P\to Q\to X\to 0\]
Therefore $\dim P=\dim Q$, so that $Q$ is indecomposable as all projective indecomposable modules have the same dimension by \cite[p. 85]{DruPhD}. But as $u_\zeta(\mathfrak{b})$ is selfinjective this would mean that $\Kopf X$ is irreducible, therefore has length $1$, a contradiction.
\end{proof}

\begin{thm}
For the stable Aus\-lan\-der-Reiten quivers of $u_\zeta(\mathfrak{b})$ and $u_\zeta(\mathfrak{n})$ if a component is isomorphic to $\mathbb{Z}[\Delta]$ with $\Delta$ Euclidean, then $\Delta$ is $\tilde{\mathbb{D}}_n$, where $n>5$. If we further impose that $\ell>h$ then also these components cannot occur.
\end{thm}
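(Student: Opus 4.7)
The plan is to apply Scherotzke's refinement \cite{Sch09} of Farnsteiner's criterion for Euclidean Auslander-Reiten components, using the non-periodic module of length $3$ produced by the previous proposition as the main numerical input.

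For $u_\zeta(\mathfrak{b})$, which is a connected selfinjective (fg)-Hopf algebra with $\Omega$-period dividing $2$ by Proposition~\ref{omegaperiod2}, Scherotzke's theorems apply. Suppose some component is of the form $\mathbb{Z}[\Delta]$ with $\Delta$ Euclidean. Then the length function on the component is subadditive and $\tau$-invariant and hence descends to an additive function on the tree class $\Delta$ whose minimum is bounded above by the length of the shortest module in the component. The non-periodic length-$3$ module $X$ from the previous proposition therefore forces this minimum to be at most $3$. A case-by-case inspection of the additive functions on $\tilde{\mathbb{A}}_n$, $\tilde{\mathbb{D}}_4$, $\tilde{\mathbb{D}}_5$, $\tilde{\mathbb{E}}_6$, $\tilde{\mathbb{E}}_7$, $\tilde{\mathbb{E}}_8$, together with the additional Scherotzke constraints (positional constraints on $X$, compatibility with the Nakayama automorphism, and the bound on the number of $\tau$-orbits inherited from the $\tau$-period bound), eliminates these diagrams and leaves only $\tilde{\mathbb{D}}_n$ with $n > 5$. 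The conclusion transfers to $u_\zeta(\mathfrak{n})$ via the skew group decomposition $u_\zeta(\mathfrak{b}) = u_\zeta(\mathfrak{n}) * (\mathbb{Z}/(\ell))^n$: Scherotzke's comparison of AR components of an algebra with those of its skew group algebra, already invoked in Proposition~\ref{omegaperiod2}, forces any Euclidean component for $u_\zeta(\mathfrak{n})$ to correspond to a Euclidean component of the same tree class for $u_\zeta(\mathfrak{b})$ (possibly after unfolding under the $(\mathbb{Z}/(\ell))^n$-action).

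The main obstacle is the improvement under $\ell > h$, where I would exclude $\tilde{\mathbb{D}}_n$ by producing a non-periodic module of length $2$. For $\ell > h$ the simple $u_\zeta(\mathfrak{b})$-modules are one-dimensional and the $\Ext^1$-computations of Drupieski \cite[Theorem~5.1.3]{Dru11} supply non-split extensions between non-isomorphic simples in every block, so the socle-filtration argument of the previous proposition refines to produce a length-$2$ submodule $Y \subseteq \soc^2(P)$. This would cut the minimum of the additive function on the component down to $2$, which is incompatible with the value at the branching vertex of $\tilde{\mathbb{D}}_n$ under Scherotzke's positional constraints. The delicate point, where the hypothesis $\ell > h$ is genuinely used, is verifying the non-periodicity of $Y$: the length-$3$ indecomposability argument of the previous proposition (deducing from $\dim P = \dim Q$ that $\Kopf X$ is simple) does not directly carry over to length $2$, so one has to argue instead via the complexity of $Y$ and the dimension of its support variety $\mathcal{V}_{u_\zeta(\mathfrak{b})}(Y)$, exploiting the enlarged cohomology available in the range $\ell > h$ to push this dimension strictly above $1$.
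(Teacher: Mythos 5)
Your overall orientation (Scherotzke's results plus the non-periodic length-$3$ module) is in the right family, but the execution diverges from the paper in ways that leave real gaps.

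First, the ``case-by-case inspection of additive functions'' is not how the reduction is carried out, and as stated it does not work. The length function on a component of a selfinjective (but not symmetric) algebra is not $\tau$-invariant, and there is no reason the length-$3$ module should lie in the hypothetical Euclidean component, so you cannot simply bound the minimum of a function on $\Delta$ by $3$. What the paper actually does is invoke Scherotzke's Theorems~3.3 and~3.7, whose hypotheses are exactly the transitivity of the $(\mathbb{Z}/\ell)^n$-action on the simple $u_\zeta(\mathfrak{b})$-modules together with the fact that every complexity-$1$ module is periodic (Proposition~\ref{omegaperiod2}); these theorems reduce the possible tree-type Euclidean classes to $\tilde{\mathbb{A}}_{12}$ and $\tilde{\mathbb{D}}_n$ with $n>5$ outright. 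The case $\tilde{\mathbb{A}}_{12}$ is then killed by the dimension bound (the projective indecomposable would have dimension $4$, contradicting $\ell \mid \dim P$). Your sketch does not reproduce this mechanism and would need substantially more detail to be a proof.

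Second, you omit the non-tree Euclidean diagram $\tilde{\mathbb{A}}_n$ entirely. Since $\mathbb{Z}[\tilde{\mathbb{A}}_n]$ is not covered by the tree-class reduction, the paper treats it separately: using that the component is attached to a principal indecomposable, writing $\Omega(-\otimes k_\lambda)$ as an automorphism of the component, and comparing with Farnsteiner's computation of $\Aut(\mathbb{Z}[\tilde{\mathbb{A}}_n])$ to obtain a contradiction from $\tau^\ell = (\tau^j\alpha^r)^{2\ell}$. This argument is absent from your proposal and is not subsumed by anything you wrote.

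Third, and most seriously, your strategy for the ``$\ell>h$'' improvement --- constructing a non-periodic module of length $2$ --- is not needed and is unlikely to succeed in the way you describe. The paper excludes $\tilde{\mathbb{D}}_n$ directly from the length-$3$ non-periodic module using Scherotzke's Theorem~3.11: under that hypothesis a $\tilde{\mathbb{D}}_n$ component would force a projective indecomposable of dimension $8$, which is impossible since $\ell$ is odd and divides every projective dimension. You yourself flag the difficulty of verifying non-periodicity of a length-$2$ submodule; that difficulty is genuine, because the dimension argument you borrow from the length-$3$ case (deducing indecomposability of $Q$ from $\dim P = \dim Q$ and then simplicity of $\Kopf X$) gives no contradiction for length $2$, and the support-variety route you gesture at would require a separate argument that is not supplied. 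The paper avoids this entirely.

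Finally, your transfer to $u_\zeta(\mathfrak{n})$ via the skew group algebra is correct in spirit and matches the paper's citation of Scherotzke's Theorem~5.15.

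Net assessment: the proposal identifies the right ingredients (Scherotzke, length-$3$ module, skew group transfer) but misstates the reduction step, omits the $\tilde{\mathbb{A}}_n$ automorphism-group argument, and replaces the decisive Theorem~3.11 dimension-$8$ contradiction with a speculative and unnecessary length-$2$ construction.
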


\begin{proof}
We have that $(\mathbb{Z}/\ell)^n$ acts transitively on the simple $u_\zeta(\mathfrak{b})$-modules and all modules of complexity $1$ are $\Omega$ and $\tau$-periodic as $u_\zeta(\mathfrak{b})$ is an (fg)-Hopf algebra and by \ref{omegaperiod2}. By a result of Scherotzke \cite[Theorem 3.3, Theorem 3.7]{Sch09} in this situation the components of Euclidean tree class of $u_\zeta(\mathfrak{b})$ and $u_\zeta(\mathfrak{n})$ can only be of the form $\mathbb{Z}[\tilde{\mathbb{A}}_{12}]$ or $\mathbb{Z}[\tilde{\mathbb{D}}_n]$, where $n>5$. In the case of $\mathbb{Z}[\tilde{\mathbb{A}}_{12}]$ the projective module would have dimension 4 by a result of Erdmann \cite[Theorem IV.3.8.3]{Er90} which is not possible as the dimension of each projective module is divisible by $\ell$. But since by the foregoing proposition there is a non-periodic module of length \cite[Theorem 3.11]{Sch09} tells us that the remainding case of tree class $\tilde{\mathbb{D}}_n$ can not occur because in this case the projective module would have dimension $8$.\\
For $u_\zeta(\mathfrak{n})$ note that $u_\zeta(\mathfrak{b})$ is a skew group algebra of $u_\zeta(\mathfrak{n})$. Therefore as $u_\zeta(\mathfrak{b})$ has no components of tree class $\tilde{\mathbb{D}}_n$ or $\tilde{\mathbb{A}}_{12}$, also $u_\zeta(\mathfrak{n})$ has no components of tree class $\tilde{\mathbb{D}}_n$ or $\tilde{\mathbb{A}}_{12}$ by \cite[Theorem 5.15]{Sch09}.\\
It remains to consider the case of a component $\Theta$ of type $\mathbb{Z}[\tilde{\mathbb{A}}_n]$. For $u_\zeta(\mathfrak{b})$ let $G=(\mathbb{Z}/\ell)^n$ for $u_\zeta(\mathfrak{n})$ let $G=\{e\}$. Let $U=u_\zeta(\mathfrak{b})$, respectively $U=u_\zeta(\mathfrak{n})$. Such a component is attached to a principal indecomposable module (see e.g. \cite{KZ11}). For $\lambda\in G$ let $k_\lambda$ be the simple module corresponding to $\lambda$. Since $\Omega\Theta\cong \Theta$ there exists $\lambda\in G$, such that the automorphism $\varphi: \Gamma_s(U)\to \Gamma_s(U), [M]\mapsto [\Omega(M\otimes k_\lambda)]$ satisfies $\varphi(\Theta)=\Theta$. Note that $(\varphi|_\Theta)^{2\ell}=\Omega^{2\ell}=\tau^\ell$. In \cite[Lemma 2.2]{Fa99b} Farnsteiner has computed the automorphism group of $\mathbb{Z}[\tilde{\mathbb{A}}_n]$, it is $\{\tau^q\circ \alpha^r| q\in \mathbb{Z}, 0\leq r\leq n-1\}$, where $\alpha$ is defined via $\alpha(s,[i])=(s,[i+1])$ with the vertex set of $\mathbb{Z}[\tilde{\mathbb{A}}_n]$ identified with $\mathbb{Z}\times \mathbb{Z}/(n)$. In this notation we write $\varphi|_\Theta=\tau^j\circ \alpha^r$ and obtain $\tau^\ell=\tau^{2\ell j}\circ \alpha^{2\ell r}$, whence $\tau^{\ell (2j-1)}=\alpha^{-2\ell r}$, a contradiction.
\end{proof}

\section{Infinite Dynkin Tree class}

In the last section we have seen that Euclidean tree classes do not occur for components of the stable Auslander-Reiten quiver containing $u_\zeta(\mathfrak{b})U^0_\zeta(\mathfrak{g})$-modules. In this section we want to exclude two of the remaining possible tree classes so that only one possible tree class remains. Our proof follows the strategy of Erdmann for local restricted enveloping algebras in \cite{Er96}.

\begin{prop}\label{dimcongell}
Let $\charac k$ be odd or zero and good for $\Phi$. Let $L\in \{\mathfrak{b},\mathfrak{n}\}$. Assume $\ell\geq h$ and $\mathfrak{g}\neq \mathfrak{sl}_2$. If $M$ is a projective module or a periodic $u_\zeta(L)$-module, which is also a $u_\zeta(\mathfrak{n})U^0_\zeta(\mathfrak{g})$-module, then $\dim M\equiv 0\mod \ell$.
\end{prop}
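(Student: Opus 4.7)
The argument splits cleanly into two cases. In the projective case, every projective indecomposable $u_\zeta(\mathfrak{b})$- or $u_\zeta(\mathfrak{n})$-module has dimension $\ell^{|\Phi^+|}$ by \cite[p.~85]{DruPhD} (the same fact used in the preceding theorem); since $|\Phi^+|\geq 1$, any projective $u_\zeta(L)$-module $M$ satisfies $\ell\mid\dim M$, and the $u_\zeta(\mathfrak{n})U_\zeta^0(\mathfrak{g})$-structure plays no role here.

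For the periodic case, my first step would be to note that $M$ is automatically a $u_\zeta(\mathfrak{b})$-module of complexity $1$: the $K_\alpha\in U_\zeta^0(\mathfrak{g})$ already act on $M$, so combining this action with the $u_\zeta(\mathfrak{n})$-action exhibits $M$ as a module over $u_\zeta(\mathfrak{b})=u_\zeta(\mathfrak{n})*(\mathbb{Z}/\ell)^n$; and since complexity is invariant under restriction across this skew extension, $\cx_{u_\zeta(\mathfrak{b})}(M)=\cx_{u_\zeta(\mathfrak{n})}(M)=1$ in both cases $L=\mathfrak{b}$ and $L=\mathfrak{n}$. By Proposition~\ref{omegaperiod2} the module $M$ is then also periodic as a $u_\zeta(\mathfrak{b})$-module, so I may work throughout with the support variety $\mathcal{V}_{u_\zeta(\mathfrak{b})}(M)\subseteq \mathfrak{n}$, a closed conical subvariety of dimension $\cx M=1$.

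The heart of the argument is the identification $\mathcal{V}_{u_\zeta(\mathfrak{b})}(M)=\mathfrak{n}_\alpha$ for a single $\alpha\in\Phi^+$. The Ginzburg--Kumar type identification of $H^{ev}(u_\zeta(\mathfrak{b}),k)$ with the coordinate ring of $\mathfrak{n}=\bigoplus_{\alpha\in\Phi^+}\mathfrak{n}_\alpha$ (see \cite{Dru11}) is equivariant for the torus $T=(k^*)^n$ acting on each summand $\mathfrak{n}_\alpha$ by the weight $\alpha$. The $U_\zeta^0(\mathfrak{g})$-grading promotes $M$ to a $T$-equivariant $u_\zeta(\mathfrak{b})$-module, which forces the cup-product map $\Phi_M\colon H^{ev}(u_\zeta(\mathfrak{b}),k)\to \Ext^\bullet(M,M)$ to be $T$-equivariant and hence $\mathcal{V}_{u_\zeta(\mathfrak{b})}(M)$ to be $T$-stable. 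Since the weights $\alpha\in\Phi^+$ are pairwise distinct and non-zero, the only one-dimensional closed $T$-stable conical subvarieties of $\mathfrak{n}$ are the coordinate lines $\mathfrak{n}_\alpha$; this pins down $\alpha$ uniquely.

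Finally, I would apply the rank-variety refinement underlying Drupieski's Dade's Lemma analogue (\cite[Theorem~4.3]{Dru11}): the condition $\mathfrak{n}_\beta\cap\mathcal{V}_{u_\zeta(\mathfrak{b})}(M)=0$ is equivalent to $M|_{u_\zeta(f_\beta)}$ being free over $u_\zeta(f_\beta)\cong k[x]/(x^\ell)$. Because $\mathfrak{g}\neq\mathfrak{sl}_2$ gives $|\Phi^+|\geq 2$, I can choose $\beta\neq\alpha$ in $\Phi^+$, and freeness of $M|_{u_\zeta(f_\beta)}$ immediately gives $\ell\mid\dim M$. The step I expect to cost the most care is this last one, because the preliminaries record only the \emph{qualitative} version of Dade's Lemma (detecting $\mathcal{V}_{u_\zeta(\mathfrak{b})}(M)=0$), whereas the present argument needs its line-by-line refinement; a secondary technical point is verifying that the cohomological $T$-equivariance really follows from the full $U_\zeta^0(\mathfrak{g})$-grading rather than only from the residual $(\mathbb{Z}/\ell)^n$-action, so both ingredients will need to be cited from Drupieski's work with some care.
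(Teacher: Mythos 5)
Your argument is correct but substitutes a noticeably heavier mechanism for the paper's middle step. You agree with the paper in the projective case and in the final step (Drupieski's rank-variety/Dade's Lemma refinement gives freeness of $M|_{u_\zeta(f_\beta)}$, hence $\ell\mid\dim M$), but to produce the root $\beta$ you invoke $T$-equivariance of the Ginzburg--Kumar isomorphism together with the $U_\zeta^0(\mathfrak{g})$-grading on $M$ to show $\mathcal{V}_{u_\zeta(\mathfrak{b})}(M)$ is $T$-stable and hence a coordinate line $\mathfrak{n}_\alpha$. The paper does much less: having recorded via \cite[Prop.~1.1]{K12} that $\mathcal{V}_{u_\zeta(\mathfrak{b})}(M)$ is a \emph{single} line in $\mathfrak{n}$, it simply notes that the root vectors $\{f_\alpha\}_{\alpha\in\Phi^+}$ are linearly independent and $|\Phi^+|\geq 2$ (this is exactly where $\mathfrak{g}\neq\mathfrak{sl}_2$ enters), so a one-dimensional linear subspace can contain at most one of them; an $\alpha$ with $f_\alpha\notin\mathcal{V}_{u_\zeta(\mathfrak{b})}(M)$ exists without knowing the line is a coordinate line. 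Your approach buys a stronger conclusion ($\mathcal{V}$ is a coordinate axis) and makes explicit where the gradability hypothesis genuinely acts, but at the cost of the equivariance of the cup product that you rightly flag as delicate; the paper's linear-algebra observation avoids all of that. Two smaller points: your statement that the only one-dimensional closed $T$-stable conical subvarieties are coordinate lines is not quite right (finite unions of coordinate axes also qualify), so both arguments in the end rest on the same input, namely that the support variety of a periodic module is \emph{irreducible}; and your preliminary reduction $\cx_{u_\zeta(\mathfrak{b})}(M)=\cx_{u_\zeta(\mathfrak{n})}(M)$ via the skew group algebra is sound and matches what the paper leaves implicit for $L=\mathfrak{n}$.
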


\begin{proof}
If $M$ is projective, it stays projective when restricting to $u_\zeta(f_\alpha)$ by \cite[Theorem 4.3]{Dru10}. Hence its dimension is divisible by $\ell$. If $M$ is periodic, then by \cite[Proposition 1.1]{K12}, $\mathcal{V}_{u_\zeta(\mathfrak{b})}(M)$ is a line. Note that by a generalization of the Theorem of Ginzburg and Kumar we have that the support variety of a $u_\zeta(\mathfrak{b})$-module identifies with a conical subvariety of $\mathfrak{n}$ (for our restrictions on the parameter see \cite[Theorem 5.2]{Dru11}). Thus there exists $\alpha\in \Phi^+$, such that $f_\alpha\notin \mathcal{V}_{u_\zeta(\mathfrak{b})}(M)$. Hence by \cite[Corollary 5.13]{Dru10} we have that $M|_{u_\zeta(f_\alpha)}$ is projective, hence $\dim M\equiv 0\mod \ell$.
\end{proof}

\begin{thm}
Let $\charac k$ be odd or zero and good for $\Phi$. Assume $\ell\geq h$. The category of $u_\zeta(\mathfrak{n})$-modules, that are also $u_\zeta(\mathfrak{n})U^0_\zeta(\mathfrak{g})$-modules does not contain any components of type $\mathbb{Z}[\mathbb{A}_\infty^\infty]$ or $\mathbb{Z}[\mathbb{D}_\infty]$.
\end{thm}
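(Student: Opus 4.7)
The plan is to adapt the symmetry argument used for $\mathbb{Z}[\tilde{\mathbb{A}}_n]$-components in the preceding theorem to the infinite-Dynkin cases $\mathbb{Z}[\mathbb{A}_\infty^\infty]$ and $\mathbb{Z}[\mathbb{D}_\infty]$. Every simple $u_\zeta(\mathfrak{n})U^0_\zeta(\mathfrak{g})$-module is one-dimensional of the form $k_\lambda$ for a character $\lambda$ in the character lattice $X\cong\mathbb{Z}^n$ of $U^0_\zeta(\mathfrak{g})$, and the tensor functors $\sigma_\lambda:=-\otimes k_\lambda$ are exact self-equivalences commuting with $\Omega$ and permuting components of $\Gamma_s$. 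The full lattice $X$ plays the role that the finite group $(\mathbb{Z}/\ell)^n$ played in the Euclidean proof, while the Nakayama computation of Proposition \ref{omegaperiod2} realises $\nu$ in the graded setting as $\sigma_{\lambda_\nu}$ for an explicit character $\lambda_\nu\in X$, so that $\tau=\Omega^2\circ\sigma_{\lambda_\nu}$.

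Suppose for contradiction that $\Theta$ is a component of $\Gamma_s(u_\zeta(\mathfrak{n})U^0_\zeta(\mathfrak{g}))$ of tree class $\mathbb{A}_\infty^\infty$ or $\mathbb{D}_\infty$. From $\Omega^2\Theta=\sigma_{-\lambda_\nu}\Theta$ the $X$-orbit of $\Theta$ is $\Omega^2$-stable; arguing as in the $\tilde{\mathbb{A}}_n$-case produces a character $\lambda\in X$ with $\varphi:=\Omega\circ\sigma_\lambda$ stabilising $\Theta$, so $\varphi|_\Theta\in\Aut(\Theta)$. Setting $H_\Theta:=\{\mu\in X:\sigma_\mu\Theta=\Theta\}$ (a finite-index subgroup of $X$) and choosing $N\geq 1$ with $N(2\lambda-\lambda_\nu)\in H_\Theta$, a direct computation yields
\[
\varphi^{2N}|_\Theta=\tau^N\circ\sigma_{N(2\lambda-\lambda_\nu)}|_\Theta\in\Aut(\mathbb{Z}[\Delta]).
\]

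The decisive observation is that $\sigma_\mu$ preserves $\dim_k$ of every module, whereas on a non-periodic infinite-Dynkin component no non-trivial power of $\tau$ does, since $\dim\Omega^n M\to\infty$ as $|n|\to\infty$ for any non-periodic $M$ (complexity at least $2$ on such components). Hence $\sigma_\mu|_\Theta$ lies in the dimension-preserving subgroup $\Aut_d(\mathbb{Z}[\Delta])$, which intersects $\langle\tau\rangle$ trivially. Invoking explicit descriptions---for $\mathbb{D}_\infty$ one has $\Aut(\mathbb{Z}[\mathbb{D}_\infty])=\langle\tau\rangle\times\langle\rho\rangle\cong\mathbb{Z}\times\mathbb{Z}/2$, with $\rho$ the boundary-arm reflection and $\Aut_d=\langle\rho\rangle$; for $\mathbb{A}_\infty^\infty$ the group is generated by $\tau$, a shift $\alpha$ along the axis and a reflection, with $\Aut_d$ generated by the latter two---one writes $\varphi|_\Theta=\tau^j\cdot\eta$ uniquely with $\eta\in\Aut_d$, and comparing $\tau$-exponents in the displayed relation forces $2Nj=N$, which is impossible for integer $j$ and $N\geq 1$. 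The main obstacle will be verifying rigorously that $\Omega\Theta$ lies in the $X$-orbit of $\Theta$ (needed to produce the stabilising $\lambda$) and that the exponent-comparison is unambiguous in the presence of the non-abelian relations in $\Aut(\mathbb{Z}[\mathbb{A}_\infty^\infty])$; the first should follow from a careful analysis of the $\Omega$-orbit interleaved with the character action, and the second from the fact that $\tau$ is central in both automorphism groups so that $\tau^j$ can be cleanly separated from $\eta$.
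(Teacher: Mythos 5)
Your plan is genuinely different from the paper's: you try to transport the quiver-automorphism argument that the paper uses to rule out $\mathbb{Z}[\tilde{\mathbb{A}}_n]$, whereas the paper's actual proof of this theorem is a module-theoretic argument in the style of Erdmann's treatment of local restricted enveloping algebras. There the key steps are to (i) choose an irreducible map $\psi$ with $\psi$ and $\Omega^{\mp 1}\psi$ both surjective (resp.\ injective), (ii) induce the rank-one module $M_1$ from a suitable Nakayama subalgebra $u_\zeta(f_\alpha)$ to obtain a periodic module $V$ with simple socle, (iii) use Frobenius reciprocity and Erdmann's monomorphism lemma to show that every module on the component has simple socle, hence has complexity $\leq 1$ and is periodic, and then (iv) derive a contradiction from the fact (Proposition \ref{dimcongell}) that the dimension of every gradable periodic or projective module is divisible by $\ell$.

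The difficulty is that your route has a gap that you yourself flag but do not close, and I do not see how it can be closed. In the paper's $\tilde{\mathbb{A}}_n$-argument, the crucial input that makes $\Omega\Theta$ accessible from $\Theta$ by a character twist (indeed, $\Omega\Theta=\Theta$ in the $u_\zeta(\mathfrak{n})$ case, where $G=\{e\}$) is the theorem of Kerner--Zacharia that a $\mathbb{Z}[\tilde{\mathbb{A}}_n]$-component of a selfinjective algebra is \emph{attached to a projective indecomposable}: then $\Omega(P/\soc P)\cong\rad P$ already lies in $\Theta$, so $\Omega$ fixes $\Theta$. No such attachment result exists for components of type $\mathbb{Z}[\mathbb{A}_\infty^\infty]$ or $\mathbb{Z}[\mathbb{D}_\infty]$; for a wild algebra $\Omega$ typically permutes infinitely many such components. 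What you actually know from $\tau=\Omega^2\circ\sigma_{\lambda_\nu}$ is only that the $X$-orbit of $\Theta$ is $\Omega^2$-stable, and as you can check, taking $\varphi=\Omega^2\circ\sigma_\lambda$ instead gives $\varphi|_\Theta=\tau\cdot\sigma_{\lambda-\lambda_\nu}|_\Theta$, from which the exponent comparison is vacuously consistent and produces no contradiction. So the passage from $\Omega^2$-stability of the orbit to the existence of a $\lambda$ with $\Omega\sigma_\lambda(\Theta)=\Theta$ is exactly where the argument breaks.

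There is also a secondary inaccuracy: your description of the dimension-preserving subgroup for $\mathbb{Z}[\mathbb{A}_\infty^\infty]$ is too large. The shift $\alpha$ along the $\mathbb{A}_\infty^\infty$-axis is \emph{not} dimension-preserving on a non-periodic component (additive functions on $\mathbb{A}_\infty^\infty$ that are bounded below and non-constant are unbounded in one direction), so $\Aut_d$ is generated by the reflection alone and in particular $\Aut(\mathbb{Z}[\mathbb{A}_\infty^\infty])$ is not $\langle\tau\rangle\times\Aut_d$. One can still run an exponent comparison after writing a general element as $\tau^j\alpha^r\rho^\epsilon$, but this requires more care than your sketch suggests and does not rescue the argument in the absence of the stabilising character $\lambda$ above.
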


\begin{proof}
By \cite[Lemma 2.5]{Fa00b}, which is valid in the context of selfinjective algebras, if we have a component $\Theta$ of type $\mathbb{A}_\infty^\infty$ or $\mathbb{D}_\infty$, then there exists an irreducible map $\psi$ corresponding to an arrow in $\Theta$, such that $\psi$ and $\Omega^{-1}\psi$ are surjective or $\psi$ and $\Omega\psi$ are injective.\\
Without loss of generality consider the case that they are surjective otherwise dual arguments yield the result. Hence there is a non-split sequence which is not almost split $0\to M\to E\stackrel{\psi}{\to} N\to 0$. Let $\alpha$ be such that $M|_{u_\zeta(f_\alpha)}$ is not projective (This is possible by \cite[Theorem 4.3]{Dru10}). Let $M|_{u_\zeta(f_\alpha)}\cong M_1^{n_1}\oplus\dots\oplus M_\ell^{n_\ell}$ where $M_i$ is the indecomposable $u_\zeta(f_\alpha)$-module of dimension $i$ and $n_i\geq 0$. Then $\sum_{i=1}^{\ell-1}n_i\neq 0$ as $M$ is not projective. Let $V:=(u_\zeta(\mathfrak{n})\otimes_{u_\zeta(f_\alpha)} M_1)^*$, then $V$ is $\Omega$-periodic: Induction of a projective resolution of the $\Omega$-periodic $u_\zeta(f_\alpha)$-module $M_1$ yields a periodic projective resolution of the induced module $V$. Therefore $V$ has complexity smaller or equal to one and hence is a direct sum of a $\Omega$-periodic (with period two by Proposition \ref{omegaperiod2}) and a projective module.\\
Let $W:=V$ or $W:=\Omega^{-1}V$. Then we have the following chain of isomorphisms since $u_\zeta(\mathfrak{n}):u_\zeta(f_\alpha)$ is a $\gamma$-Frobenius extension:
\[\underline{\Hom}_{u_\zeta(\mathfrak{n})}(M,W)\cong \underline{\Hom}_{u_\zeta(\mathfrak{n})}(W^*,M^*)\cong \underline{\Hom}_{u_\zeta(f_\alpha)}(M_i, M^*)\cong \underline{\Hom}_{u_\zeta(f_\alpha)}(M,M_i^*),\]
where $i=1,\ell-1$ depending on whether $W=V,\Omega^{-1}V$ respectively. This space has dimension $\geq \sum_{i=1}^{\ell-1}n_i$ by the theory of the algebra $u_\zeta(f_\alpha)\cong K[X]/X^\ell$. Moreover $\soc W$ is simple since
\[\Hom_{u_\zeta(\mathfrak{n})}(k,W)\cong \Hom_{u_\zeta(\mathfrak{n})}(W^*,k^*)\cong \Hom_{u_\zeta(f_\alpha)}(M_i,k)\]
which is one-dimensional. In particular $V$ is an indecomposable periodic module.\\
We will show next that $\soc M$ is also simple. We do this by applying \cite[Proposition 1.5]{Er95} to the $\Omega$-periodic module $V$. Hence we either have an embedding $M\to V$ or every map $M\to \Omega^{-1}(V)$ which does not factor through a projective module is a monomorphism. If there is a monomorphism $M\to V$ then since $\soc V$ is simple, so is $\soc M$. Otherwise take a non-zero element of $\underline{\Hom}_{u_\zeta(\mathfrak{n})}(M,\Omega^{-1}(V))$; then a representative must be a monomorphism. Moreover $\soc \Omega^{-1}V$ is simple and so is $\soc M$.\\
This will imply that for all $j\in \mathbb{Z}$, the socle of $\Omega^j(M)$ is simple. As $\Omega$ induces an automorphism of the stable Auslander-Reiten quiver we have an almost split sequence $0\to \Omega^j M\to \Omega^j E\oplus P\to \Omega^j N\to 0$ where $P$ is projective. If $P=0$ then $\Omega^j\psi$ is an irreducible epimorphism and the socle of $\Omega^j M$ is simple by the above argument, replacing $M$ with $\Omega^j M$. Otherwise the sequence is of the form $0\to \rad{P}\to \rad{P}/\soc(P)\oplus P\to P/\soc(P)\to 0$, in particular the map $\Omega^j \psi$ is an irreducible monomorphism with cokernel isomorphic to $\soc \Omega^j M\cong \soc P$, which is simple.\\
Concluding and taking into account that $u_\zeta(\mathfrak{n})$ is local we have a minimal injective resolution of the form
$0\to M\to u_\zeta(\mathfrak{n})\to u_\zeta(\mathfrak{n})\to \dots$.\\
Hence the complexity of $M$ is $\leq 1$ and therefore $M$ is periodic; in particular $\dim M\equiv 0\mod \ell$ by the foregoing proposition.\\
Now $\dim M+\dim \Omega^{-1}M=\dim u_\zeta(\mathfrak{n})$ by the minimal injective resolution. We may assume that $\dim M\geq \frac{1}{2}\dim u_\zeta(\mathfrak{n})$, otherwise replace $M$ by $\Omega^{-1}M$, which is possible since by the choice of $\psi$ we can do the same argument with $\Omega^{-1}\psi$. Now
\[\dim V=\frac{1}{\ell}\dim u_\zeta(\mathfrak{n})< \frac{1}{2}\dim u_\zeta(\mathfrak{n})\leq \dim M.\]
So there is no monomorphism $M\to V$. Therefore every homomorphism $M\to \Omega^{-1} V$ that does not factor through a projective module is injective by \cite[Proposition 1.5]{Er95}. By the above argument we have that $\dim \underline{\Hom}_{u_\zeta(\mathfrak{n})}(M,\Omega^{-1}V)\geq \sum_{i=1}^{\ell-1}n_i$. This must be at least $2$; otherwise we would have that $M|_{u_\zeta(f_\alpha)}$ has a unique non-projective summand and the dimension of $M$ would not be divisible by $\ell$. Hence there are $\phi_1,\phi_2\in \Hom_{u_\zeta(\mathfrak{n})}(M,\Omega^{-1}V)$ with $[\phi_1], [\phi_2]\in \underline{\Hom}_{u_\zeta(\mathfrak{n})}(M,\Omega^{-1}V)$ linearly independent. By the above reasoning we have that $\phi_1$ and $\phi_2$ must be monomorphisms.\\
We know that $\soc M$ is simple, hence there is some $c\in k$ such that $\phi_1-c\phi_2$ is not a monomorphism, and again by \cite[Proposition 1.5]{Er95} we have that $\phi_1-c\phi_2$ factors through a projective module, a contradiction.
\end{proof}

As a corollary we also get the corresponding statement for the Borel part:

\begin{thm}
The category of gradable $u_\zeta(\mathfrak{b})$-modules does not contain any components of type $\mathbb{Z}[\mathbb{A}_\infty^\infty]$ or $\mathbb{Z}[\mathbb{D}_\infty]$.
\end{thm}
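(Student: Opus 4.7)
The plan is to deduce this corollary from the preceding theorem by transferring tree-class information from the nilpotent part to the Borel part via the skew group algebra identification $u_\zeta(\mathfrak{b}) = u_\zeta(\mathfrak{n}) * (\mathbb{Z}/\ell)^n$, mirroring the strategy used in Section 4 to transfer the absence of the Euclidean tree classes $\tilde{\mathbb{D}}_n$ and $\tilde{\mathbb{A}}_{12}$ between the two algebras.

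First, I would argue by contradiction: suppose $\Theta$ is a component of the stable Auslander-Reiten quiver of $u_\zeta(\mathfrak{b})$ of tree class $\mathbb{A}_\infty^\infty$ or $\mathbb{D}_\infty$ that contains the restriction of some $u_\zeta(\mathfrak{b})U^0_\zeta(\mathfrak{g})$-module $M$. Second, I would invoke Scherotzke's theorem \cite[Theorem 5.15]{Sch09}: because $u_\zeta(\mathfrak{b})$ is the skew group algebra $u_\zeta(\mathfrak{n}) * (\mathbb{Z}/\ell)^n$, the tree classes occurring in the stable Auslander-Reiten quivers of $u_\zeta(\mathfrak{b})$ and $u_\zeta(\mathfrak{n})$ coincide, and the indecomposable summands of $M|_{u_\zeta(\mathfrak{n})}$ lie in components of $u_\zeta(\mathfrak{n})$-mod of the same tree class as $\Theta$ (this is exactly the type of step that was invoked at the end of the proof of the Euclidean theorem, in the opposite direction).

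Third, I would observe that the grading passes through restriction: viewing $M$ as a module over $u_\zeta(\mathfrak{b})U^0_\zeta(\mathfrak{g})$ and restricting along the inclusion $u_\zeta(\mathfrak{n})U^0_\zeta(\mathfrak{g}) \hookrightarrow u_\zeta(\mathfrak{b})U^0_\zeta(\mathfrak{g})$ endows $M|_{u_\zeta(\mathfrak{n})}$ canonically with a $u_\zeta(\mathfrak{n})U^0_\zeta(\mathfrak{g})$-module structure. Hence the component of $u_\zeta(\mathfrak{n})$-mod produced in the second step contains a gradable $u_\zeta(\mathfrak{n})$-module but is still of tree class $\mathbb{A}_\infty^\infty$ or $\mathbb{D}_\infty$, directly contradicting the preceding theorem.

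The main obstacle I anticipate is the bookkeeping needed in the second step: one must be sure that Scherotzke's skew-group correspondence sends the single component $\Theta$ to a (possibly $(\mathbb{Z}/\ell)^n$-orbit of) component(s) of the \emph{same} infinite Dynkin tree class, rather than splitting or merging the shape in a way that escapes the hypothesis of the preceding theorem. Since the $U^0_\zeta(\mathfrak{g})$-action on $M$ already factors through its image in $u_\zeta(\mathfrak{b})$ and therefore commutes with the $(\mathbb{Z}/\ell)^n$-twisting used in the skew-group construction, each summand of $M|_{u_\zeta(\mathfrak{n})}$ inherits the grading, so this bookkeeping should work out but deserves to be spelled out explicitly.
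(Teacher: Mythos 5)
Your strategy --- deducing the $u_\zeta(\mathfrak{b})$-statement from the $u_\zeta(\mathfrak{n})$-statement by passing back and forth along the skew group algebra relation --- is the right idea, but the route through Scherotzke's Theorem~5.15 is not what the paper does, and it leaves a real gap that the paper's argument is specifically designed to avoid.

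The paper does not invoke Scherotzke's skew group algebra theorem here at all. Instead it exploits the observation that the two ``graded'' algebras coincide: $u_\zeta(\mathfrak{b})U^0_\zeta(\mathfrak{g}) = u_\zeta(\mathfrak{n})U^0_\zeta(\mathfrak{g})$ (there is no proper inclusion $u_\zeta(\mathfrak{n})U^0_\zeta(\mathfrak{g}) \hookrightarrow u_\zeta(\mathfrak{b})U^0_\zeta(\mathfrak{g})$ as you write --- adjoining $U^0_\zeta(\mathfrak{g})$ already supplies the toral part of $u_\zeta(\mathfrak{b})$). Combined with Lemma~\ref{res}(iii), which says that the forgetful functor $\mathcal{F}\colon \modu u_\zeta(L)U^0_\zeta(\mathfrak{g}) \to \modu u_\zeta(L)$ sends indecomposables to indecomposables and almost split sequences to almost split sequences, one gets a commutative square in which both vertical restrictions behave perfectly, and therefore so does the bottom horizontal restriction $\modu u_\zeta(\mathfrak{b}) \to \modu u_\zeta(\mathfrak{n})$ on the image of $\mathcal{F}$. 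The tree-class conclusion is then extracted not by a blanket ``tree classes coincide'' statement but by tracking concrete combinatorial invariants of AR sequences: a $\mathbb{Z}[\mathbb{D}_\infty]$ component has an AR sequence with three indecomposable summands, a $\mathbb{Z}[\mathbb{A}_\infty^\infty]$ component has none with indecomposable middle term, and both properties transfer along the restriction.

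The gap in your version sits exactly where you anticipated trouble. You write that ``the indecomposable summands of $M|_{u_\zeta(\mathfrak{n})}$ lie in components \ldots of the same tree class,'' and then claim each such component contains a gradable module because $M|_{u_\zeta(\mathfrak{n})}$ is graded. But if $M|_{u_\zeta(\mathfrak{n})}$ decomposed into several indecomposable summands, those summands would not individually be restrictions of $u_\zeta(\mathfrak{n})U^0_\zeta(\mathfrak{g})$-modules, so the hypothesis of the preceding theorem (``containing a gradable module'') would not be verified for their components. The fact that saves the argument --- that $M|_{u_\zeta(\mathfrak{n})}$ is already indecomposable when $M$ is an indecomposable $u_\zeta(\mathfrak{n})U^0_\zeta(\mathfrak{g})$-module --- is exactly Lemma~\ref{res}(iii) applied to $L=\mathfrak{n}$, and it is the linchpin of the paper's proof. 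Without invoking it, your ``bookkeeping'' obstacle is not merely bookkeeping; it is a genuine hole. Once you add that observation, though, you can dispense with Scherotzke's Theorem~5.15 entirely and obtain the paper's cleaner argument.
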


\begin{proof}
In the following commutative diagram indecomposable modules are mapped to indecomposable modules and Auslander-Reiten sequences are mapped to Auslander-Reiten sequences by the vertical arrows by Lemma \ref{res}. Therefore also by the horizontal arrow.
\[\begin{xy}
\xymatrix{
\modu u_\zeta(\mathfrak{b}) U_\zeta^0(\mathfrak{g})\ar@{-}@<2pt>[r]\ar@{-}@<-2pt>[r]\ar[d]^{\mathcal{F}} &\modu u_\zeta(\mathfrak{n}) U_\zeta^0(\mathfrak{g})\ar[d]^{\mathcal{F}}\\
\modu u_\zeta(\mathfrak{b})\ar[r]^{\res} &\modu u_\zeta(\mathfrak{n})
}
\end{xy}
\]
Thus the existence of an Auslander-Reiten sequence with three indecomposable direct summands would be preserved and therefore the non-existence of such sequences for $u_\zeta(\mathfrak{n})$ implies the same for $u_\zeta(\mathfrak{b})$, i.e. there are no components of type $\mathbb{Z}[\mathbb{D}_\infty]$. Furthermore as for every component $\Theta$ containing a gradable module there is an Auslander-Reiten sequence in $\Theta$ with indecomposable middle term for $u_\zeta(\mathfrak{n})$ the same has to hold for $u_\zeta(\mathfrak{b})$ and thus the case of components of type $\mathbb{Z}[\mathbb{A}_\infty^\infty]$ can be excluded.
\end{proof}

\begin{rmk}
The same argument also works in the case of gradable modules for the restricted enveloping algebra of a Borel subalgebra.
\end{rmk}

\section*{Acknowledgement}
The results of this article are part of my doctoral thesis, which I am currently writing at the University of Kiel. I would like to thank my advisor Rolf Farnsteiner for his continuous support, especially for answering my questions on the classical case. I also would like to thank Chris Drupieski for answering some of my questions on Frobenius-Lusztig kernels, especially on restrictions of the parameter. Furthermore I thank the members of my working group for proof reading.

\bibliographystyle{alpha}
\bibliography{publication}

\end{document}